\documentclass[hidelinks,onefignum,onetabnum]{siamart251216}

\usepackage{lipsum}
\usepackage{amsfonts}
\usepackage{graphicx}
\usepackage{epstopdf}
\usepackage{algorithmic}
\usepackage{amsmath}
\usepackage{enumitem}
\usepackage{extarrows}
\usepackage{xcolor}
\usepackage{hyperref}
\usepackage{tabularx}
\usepackage{multirow}
\usepackage{fancyhdr}
\usepackage[mathscr]{euscript}
\ifpdf
  \DeclareGraphicsExtensions{.eps,.pdf,.png,.jpg}
\else
  \DeclareGraphicsExtensions{.eps}
\fi

\newcommand{\R}{\mathbb{R}}

\newcommand{\ip}[2]{\left(#1,\,#2\right)}
\newcommand{\dual}[2]{\left\langle #1,\,#2\right\rangle}
\newcommand{\norm}[1]{\left\|#1\right\|}

\newcommand{\divg}{\operatorname{div}}

\newsiamremark{remark}{Remark}
\newsiamremark{hypothesis}{Hypothesis}
\crefname{hypothesis}{Hypothesis}{Hypotheses}
\newsiamthm{claim}{Claim}
\newsiamremark{fact}{Fact}
\crefname{fact}{Fact}{Facts}
\newsiamthm{example}{Example}

\definecolor{yesgreen}{HTML}{2E7D32}
\definecolor{nored}{HTML}{C62828}
\definecolor{maybeorange}{HTML}{E65100}

\headers{Well-Posedness of Filtering Equations}{Z. Sun, S. Zhou and S. S.-T. Yau}

\title{Well-posedness of Filtering Equations in Weighted Sobolev Spaces with Unbounded System Coefficients \thanks{Submitted to the editors DATE. 
\funding{This work is supported by the National Natural Science Foundation of China (NSFC) under Grant No. 123B2020 for Zeju Sun, and the National Natural Science Foundation of China under Grant No. 42450242, and Tsinghua University Education Foundation for Stephen S.-T. Yau.}}}

\author{
	Zeju Sun
	\thanks{
		Beijing Institute of Mathematical Sciences and Applications, Beijing, China
		(\email{sunzeju@bimsa.cn}).}
	\and
	Songlin Zhou
	\thanks{
		Qiuzhen College, Tsinghua University, Beijing, China
		(\email{zhousl24@mails.tsinghua.edu.cn}).
	}
	\and
	Stephen S.-T. Yau
	\thanks{
		Department of Mathematical Sciences, Tsinghua University, Beijing, China
		(\email{yau@uic.edu}).
	}
}

\usepackage{amsopn}

\ifpdf
\hypersetup{
  pdftitle={Well-Posedness of Filtering Equations with Unbounded System Coefficients},
  pdfauthor={Zeju Sun, Songlin Zhou and Stephen S.-T. Yau}
}
\fi

\begin{document}

\maketitle

\footnotetext[5]{Zeju Sun and Songlin Zhou contributed equally to this work.}

% REQUIRED
\begin{abstract}
Nonlinear filtering problem is one of the core subjects in modern control theory. In this paper, we will study the well-posedness of the three fundamental evolution equations arising in continuous-time nonlinear filtering--the robust Duncan-Mortensen-Zakai (DMZ) equation, the stochastic DMZ equation, and the Kushner-Stratonovich equation--within a unified buffered weighted formulation. An exponential-type weight function and the corresponding weighted Sobolev spaces are introduced to enable a variational treatment of the filtering equations in a more general setting, in which the coefficients of the filtering system may be unbounded with polynomial growth. Under mild and easily verifiable assumptions, we first establish the well-posedness of the weak solution to the robust DMZ equation in these weighted spaces. Using the gauge (exponential) transformation and its inverse, these results are then transferred to the stochastic DMZ equation and the Kushner-Stratonovich equation, whose solutions are shown to exist and be unique in buffered weighted Sobolev spaces, yielding a unified treatment of all three filtering equations. Sufficient conditions for the well-posedness are also summarized, which illustrate the wide applicability of the proposed framework to general nonlinear filtering systems.
\end{abstract}

% REQUIRED
\begin{keywords}
nonlinear filtering, Duncan-Mortensen-Zakai equation, Kushner-Stratonovich equation, weighted Sobolev spaces
\end{keywords}

% REQUIRED
\begin{MSCcodes}
60G35, 93E11, 60H15, 35R60
\end{MSCcodes}

	\section{Introduction}\label{sec:intro}
%=====================================================================

The continuous-time nonlinear filtering problem concerns the model
\begin{equation}\label{eq:model}
	\left\{\begin{aligned}
		dX_t &= f(X_t)\,dt + dV_t, \qquad X_0\sim\pi_0,\\
		dY_t &= h(X_t)\,dt + dW_t, \qquad Y_0=0,
	\end{aligned}\right.\ t\in [0,T],
\end{equation}
where $T> 0$ is a fixed finite terminal time; the signal $X$ takes values in $\R^n$; the observation $Y$ takes values in $\R^m$; $V$ and $W$ are independent standard Brownian motions; $f:\mathbb{R}^{n}\rightarrow\mathbb{R}^{n}$ and $h:\mathbb{R}^{n}\rightarrow\mathbb{R}^{m}$ are transition and observation functions of certain classes; and the initial distribution $\pi_0$ is independent of $(V,W)$. The infinitesimal generator of the state process $X_{t}$, which is a second-order elliptic operator, is denoted by
\begin{equation}
	L = \frac{1}{2}\Delta + f\cdot\nabla  = \frac{1}{2}\sum_{i=1}^{n}\frac{\partial^{2}}{\partial x_{i}^{2}} + \sum_{i=1}^{n} f_{i}\frac{\partial }{\partial x_{i}}.
\end{equation}

The objective of the filtering problem is to provide accurate and real-time estimates of an unknown stochastic dynamics (namely, the state process $X_t$ in \eqref{eq:model}) based on noisy observations (namely, the observation process $Y_t$ in \eqref{eq:model}). Mathematically, the object of interest is the conditional distribution $\pi_t$ of $X_t$ given the observation $\sigma$-algebra $\mathcal{Y}_t = \sigma(Y_s : s \le t)$, defined by
\begin{equation*}
	\pi_t(\cdot) = P(X_t \in \cdot \mid \mathcal{Y}_t).
\end{equation*}

Since the notable Kalman-Bucy filter \cite{Kalman60,KalmanBucy61} was introduced in the 1960s for linear Gaussian systems, filtering theory and algorithms have found tremendous applications across a wide range of practical scenarios, including the aerospace industry \cite{Hutchinson84,Kerr87}, communication technology \cite{GordonSalmondSmith93}, finance \cite{KimNelson1999}, geoscience \cite{Evensen92}, autonomous driving \cite{Geiger2013}, robotics \cite{RenganathanSummers22}, and so on. Except for several special cases, such as linear Gaussian systems, the conditional distribution $\pi_t$ does not admit a finite-dimensional sufficient statistic. Instead, its evolution is described by several stochastic partial differential equations.

Classically, the evolution of the conditional distribution $\pi_{t}$ is governed by the Kushner-Stratonovich equation \cite{Kushner67,Stratonovich60}:
\begin{equation}\label{eq:kse}
	\pi_t(\varphi)=\pi_0(\varphi)+\int_0^t\pi_s(L\varphi)ds+
	\int_0^t[\pi_s(\varphi h)-\pi_s(\varphi)\pi_s(h)]^{\top}[dY_s-\pi_s(h)ds], \ t\in [0,T],
\end{equation}
where $\varphi\in C_{c}^{\infty}(\mathbb{R}^{n})$ is an arbitrary smooth function with compact support. 

If the conditional distribution $\pi_t$ is absolutely continuous with respect to the Lebesgue measure on $\mathbb{R}^{n}$ for each $t\in [0,T]$ and almost surely for each observation trajectory $\{Y_{t}:0\leq t\leq T\}$, then formally, its unnormalized density function $\sigma(t,x)$ satisfies the Duncan-Mortensen-Zakai (DMZ) equation \cite{Duncan67,Mortensen66,Zakai69}:
\begin{equation}
	d\sigma(t,x) = L^{*}\sigma(t,x)dt + h^{\top}(x)\sigma(t,x) dY_{t},\ t\in[0,T],
	\label{eq:dmz}
\end{equation}
which is a linear stochastic partial differential equation driven by the observation process, with $L^{*}$ the adjoint operator of $L$; see \cite{BainCrisan09,Kallianpur80,Pardoux91} for systematic accounts.

A classical idea, which goes back to Clark \cite{Clark78} and Davis \cite{Davis80}, is to remove the stochastic integral from the Zakai equation by the multiplicative gauge (exponential) transformation
\begin{equation}\label{eq:gauge}
	u(t,x) \;=\; e^{-Y_t^{\top} h(x)}\,\sigma(t,x).
\end{equation}
The transformed density $u$ satisfies, for each fixed observation path, a \emph{deterministic} linear parabolic equation with coefficients depending on the path $Y$ only through its current value $Y_t$: 
\begin{equation}\label{eq:rdmz}
	\partial_{t} u\;=\; \tfrac12\Delta u + b_Y(t,x)\cdot\nabla u + P_Y(t,x)\,u,
	\qquad u(0,\cdot)=u_0,
\end{equation}
with
\begin{equation}\label{eq:coeffs}
	\begin{aligned}
		b_Y(t,x) &= -f(x) + (\nabla h(x))^{\top}Y_t,\\
		P_Y(t,x) &= -\divg f(x) - \tfrac12|h(x)|^2 + \tfrac12\,Y_t^{\top}\Delta h(x) - Y_{t}^{\top}\nabla h(x) f(x)+ \tfrac12\big|(\nabla h(x))^{\top}Y_t\big|^2 .
	\end{aligned}
\end{equation}
Equation \eqref{eq:rdmz} is now referred to as the \emph{robust} or \emph{pathwise} DMZ equation, which underlies the robust filtering approximations in the sense of Clark and Davis and the real-time DMZ program of Yau and Yau \cite{Clark78,Davis80,YauYau00,YauYau08,LuoYau13}.  

When $f$ and $h$ are bounded with bounded derivatives, the well-posedness theory for the robust DMZ equation and for the Zakai equation is classical \cite{Pardoux79,KrylovRozovskii81,RozovskyLototsky18}.  The genuinely difficult--and practically relevant--regime is that with \emph{unbounded} coefficients: linear and polynomial drifts and sensors are the rule rather than the exception in applications, including Kalman-Bucy filtering, polynomial sensors, and other tracking models.  Early results for unbounded coefficients were obtained by Baras, Blankenship, and Hopkins \cite{BBH83}, in which the existence of a fundamental solution of the robust DMZ equation is derived for the one-dimensional case. A decisive step was taken by Yau and Yau \cite{YauYau08}. The existence and uniqueness of the weak solution of the DMZ equation were studied in classical Sobolev spaces through a standard variational approach, under mild assumptions which essentially require a greater growth rate of the observation function $h$ than the drift term $f$.  

In this paper, we will first study the well-posedness of the robust DMZ equation \eqref{eq:rdmz} under a weighted variational framework. The existence, uniqueness, strict positivity and robustness of the weak solution in some (exponentially) weighted Sobolev spaces will be derived. Based on the results for the robust DMZ equation and the inverse gauge (exponential) transformation, we prove the well-posedness of the stochastic DMZ equation \eqref{eq:dmz} and the Kushner-Stratonovich equation \eqref{eq:kse} in a \emph{buffered} weighted Sobolev spaces. Finally, the local Lipschitz robustness of the normalized conditional probability density with respect to the observation trajectory is obtained based on the strict positivity of the solution of the robust DMZ equation. This result shows the filter consistency in application scenarios, where instead of the whole continuous trajectory, the observations can only be collected at discrete time steps. 

The main advantage of the weighted variational approach proposed herein is that the well-posedness of the filtering equations can be established under more general and easily verifiable assumptions, and that the three important equations—namely, the robust DMZ equation, the stochastic DMZ equation, and the Kushner-Stratonovich equation—can all be studied within a unified class of buffered weighted Sobolev spaces. Intuitively, to ensure well-posedness of the filtering system \eqref{eq:model}, assumptions should be imposed either on the drift term $f(x)$ so that the state process is stable, or, more importantly, on the observation function $h$ so that sufficiently informative observations of the state process are available. Within the present weighted variational framework, these assumptions can be combined and coupled with each other in order to cover a broader class of systems in practical applications. Such a combination is essential, for instance, in many common filtering systems, including the well-known linear Gaussian case. Indeed, despite its significance in numerous applications, the case of detectable linear Gaussian systems with unstable state dynamics has remained largely incompatible with the well-posedness conditions imposed in most prior works on the DMZ equation (including its robust variants).

The organization of this paper is as follows.  Section~\ref{sec:framework} is devoted to constructing the weighted variational framework and define the weak solution of the robust DMZ equation under this framework. The well-posedness of the robust DMZ equation under this weighted variational framework is presented in Section \ref{sec:3}. The existence and uniqueness result of the stochastic DMZ equation and the Kushner-Stratonovich equation in buffered weighted spaces are studied in Section \ref{sec:4}. Useful sufficient conditions for well-posedness are summarized and illustrated through several classes of examples in Section \ref{sec:5}, and concluding remarks are given in Section \ref{sec:conclusion}.

%=====================================================================

\section{Weak solution of the robust DMZ equation under the weighted variational framework}\label{sec:framework}
%=====================================================================
In this section, we will first introduce the weighted Sobolev spaces, in which the robust DMZ equation is considered throughout this paper. Basic properties of the weighted Sobolev spaces will then be summarized. Finally, we will define the weak solution of the robust DMZ equation on these spaces based on the weighted variational framework. 

\subsection{Weighted Sobolev spaces}
\label{sec:3.1}
For given constants $\eta > 0$ and $p\geq 1$, let us define an exponential weight function on $\mathbb{R}^{n}$ as
\begin{equation}
	w_{\eta,p}(x) = \exp\left(2\eta (1 + |x|^{2})^{\frac{p+1}{2}}\right), \ x\in\mathbb{R}^{n}.
\end{equation}
Based on the weight function $w_{\eta,p}(x)$, the Hilbert space consisting of all square-integrable functions on $\mathbb{R}^{n}$ is denoted by
\begin{equation}\label{eq:wl2}
	H_{\eta,p}(\mathbb{R}^{n}):=L^2(w_{\eta,p}\,dx) = \left\{v\in L^{2}(\mathbb{R}^{n}): \int_{\R^n}|v|^2w_{\eta,p}\,dx < \infty\right\}
\end{equation}
with norm and inner products:
\begin{equation}
	\norm{v}_{\eta,p}^2:=\int_{\R^n}|v|^2w_{\eta,p}\,dx,\ (u,v)_{\eta,p} = \int_{\mathbb{R}^{n}}uvw_{\eta,p}dx,\ \forall\ u,v\in H_{\eta,p}(\mathbb{R}^{n}).
\end{equation}
With the weighted $L^{2}$ space $H_{\eta,p}(\mathbb{R}^{n})$, the weighted Sobolev space $V_{\eta,p}(\mathbb{R}^{n})$ is defined as:
\begin{equation}\label{eq:spaces}
	V_{\eta,p}(\mathbb{R}^{n}):=\big\{v\in H_{\eta,p}(\mathbb{R}^{n}):\ \nabla v\in (H_{\eta,p}(\mathbb{R}^{n}))^{n},\ (1+|x|^{2})^{\frac{p}{2}}v\in H_{\eta,p}(\mathbb{R}^{n})\big\},
\end{equation}
with norm
\[
\norm{v}_{V_{\eta,p}}^2:=\norm{\nabla v}_{\eta,p}^2+\norm{(1+|x|^{2})^{\frac{p}{2}}v}_{\eta,p}^2 .
\]
Henceforth, to simplify notation, we omit the explicit dependence on the state space $\mathbb{R}^{n}$ in the weighted spaces $H_{\eta,p}(\mathbb{R}^{n})$ and $V_{\eta,p}(\mathbb{R}^{n})$, and write them simply as $H_{\eta,p}$ and $V_{\eta,p}$. This abbreviation will not cause any ambiguity in the sequel.

As in the classical Sobolev spaces, the weighted spaces $V_{\eta,p}$, $H_{\eta,p}$, together with the dual space $V_{\eta,p}'$, form a Gelfand triple, as stated in the following lemma.
\begin{lemma}
	The triplet $(V_{\eta,p}, H_{\eta,p}, V_{\eta,p}')$ forms a Gelfand triple:
	\begin{equation}
		V_{\eta,p}\hookrightarrow H_{\eta,p}\cong H_{\eta,p}'\hookrightarrow V_{\eta,p}',
	\end{equation}
	that is, the embedding $V_{\eta,p} \hookrightarrow H_{\eta,p}$ is continuous and dense, and the duality pairing $\dual{\cdot}{\cdot}_{V_{\eta,p}',V_{\eta,p}}$ extends the inner product $\ip{\cdot}{\cdot}_{\eta,p}$ in $H_{\eta,p}$.
	\label{lem:2.1}
\end{lemma}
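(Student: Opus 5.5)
Our plan is to verify the three ingredients of a Gelfand triple in turn: (i) $V_{\eta,p}$ and $H_{\eta,p}$ are Hilbert spaces; (ii) the embedding $V_{\eta,p}\hookrightarrow H_{\eta,p}$ is continuous; (iii) it is dense. The identification $H_{\eta,p}\cong H_{\eta,p}'$ and the extension of the inner product by the duality pairing then follow abstractly.

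For (i), $H_{\eta,p}=L^2(w_{\eta,p}dx)$ is complete because it is an $L^2$ space with respect to the measure $w_{\eta,p}dx$. Since $w_{\eta,p}\ge 1$, it is also contained in $L^2(\mathbb{R}^n)$, so the set inclusion in \eqref{eq:wl2} is harmless. For completeness of $V_{\eta,p}$, take a Cauchy sequence $v_k$. Then $v_k$, $\nabla v_k$ and $(1+|x|^2)^{p/2}v_k$ are Cauchy in $H_{\eta,p}$ and converge to limits $v$, $g$, $\tilde v$. Convergence in $H_{\eta,p}$ implies convergence in $L^2_{loc}$, so passing to the limit in $\int v_k\partial_i\phi=-\int \partial_i v_k\,\phi$ gives $g=\nabla v$ distributionally. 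A subsequence converging a.e. gives $\tilde v=(1+|x|^2)^{p/2}v$. For (ii), since $(1+|x|^2)^{p/2}\ge1$ we get $\|v\|_{\eta,p}\le\|(1+|x|^2)^{p/2}v\|_{\eta,p}\le\|v\|_{V_{\eta,p}}$, so the embedding is continuous with constant $1$.

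The main step is (iii), density. We will show that $C_c^\infty(\mathbb{R}^n)\subset V_{\eta,p}$, which is clear because the weight is continuous and hence bounded on compact sets. We then show that $C_c^\infty$ is dense in $H_{\eta,p}$. Given $v\in H_{\eta,p}$, truncate by setting $v\mathbf{1}_{B_R}\to v$; this holds by dominated convergence with respect to the measure $w_{\eta,p}dx$. On the ball $B_{R+1}$ the weight is bounded above and below by positive constants, so the $H_{\eta,p}$-norm of a function supported there is equivalent to its unweighted $L^2$ norm. Standard mollification of $v\mathbf{1}_{B_R}$ therefore gives $C_c^\infty$ functions supported in $B_{R+1}$ converging in $H_{\eta,p}$. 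Note that no cutoff derivative estimates are needed here, because we only need density in the weaker $H$-norm. This keeps the argument simple despite the rapidly growing exponential weight, which is the one place where one might fear trouble.

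Finally, by Riesz we identify $H_{\eta,p}$ with its dual via $(\cdot,\cdot)_{\eta,p}$. The restriction map $H_{\eta,p}'\to V_{\eta,p}'$, $\ell\mapsto\ell|_{V_{\eta,p}}$, is continuous by (ii). It is injective by (iii), since a functional vanishing on a dense subspace is zero. Hence for $u\in H_{\eta,p}$ and $v\in V_{\eta,p}$ we have $\langle u,v\rangle_{V_{\eta,p}',V_{\eta,p}}=(u,v)_{\eta,p}$. The density of $H_{\eta,p}$ in $V_{\eta,p}'$ follows from the reflexivity of $V_{\eta,p}$ (it is a Hilbert space). Indeed, if $\xi\in V_{\eta,p}''\cong V_{\eta,p}$ annihilates $H_{\eta,p}$, then $(u,\xi)_{\eta,p}=0$ for all $u\in H_{\eta,p}$, and so $\xi=0$.
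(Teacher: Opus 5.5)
Your proof is correct, and it differs from the paper's mainly in the density step. The paper sets out to show that $C_c^\infty(\mathbb{R}^n)$ is dense in \emph{both} $V_{\eta,p}$ and $H_{\eta,p}$. It cuts off with $\chi_R(x)=\chi(x/R)$ and uses the pointwise bound $|v\nabla\chi_R|\le \frac{C}{R}|v|$ together with dominated convergence to approximate in the $V_{\eta,p}$-norm by compactly supported elements. It then mollifies, using that the weight is bounded above and below on compact sets.

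You observe, correctly, that density of the embedding only needs $C_c^\infty\subset V_{\eta,p}$ plus density of $C_c^\infty$ in $H_{\eta,p}$. This lets you truncate by $\mathbf{1}_{B_R}$ and skip the gradient estimate entirely. You also supply what the paper leaves implicit: completeness of $V_{\eta,p}$, so that it really is a Hilbert space as Lax--Milgram and the Lions--Magenes theorem in the appendix require; injectivity of $H_{\eta,p}'\to V_{\eta,p}'$; and density of $H_{\eta,p}$ in $V_{\eta,p}'$ via reflexivity.

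What the paper's longer route buys is the stronger fact that $C_c^\infty$ is dense in $V_{\eta,p}$ itself. That fact is invoked later, in the proof of Theorem~\ref{thm:2.2}(ii), to extend the G\aa rding inequality from test functions to all of $V_{\eta,p}$. With your version it would still have to be proved separately.

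Your remark that the exponential weight is where one might fear trouble in the cutoff step is misplaced. The bound $|\nabla\chi_R|\le C/R$ is pointwise and independent of $w_{\eta,p}$, so the paper's cutoff argument goes through without difficulty.
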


\begin{proof}
	Let $v\in V_{\eta,p}$. Notice that $(1+|x|^{2})^{p/2}\geq 1$ and thus,
	\begin{equation}
		\norm{v}_{\eta,p}\le\norm{(1+|x|^{2})^{\frac{p}{2}}v}_{\eta,p}^2\le\norm{v}_{V_{\eta,p}}
	\end{equation}
	Therefore, the embedding $V_{\eta,p} \hookrightarrow H_{\eta,p}$ is continuous. 
	
	In order to prove that the embedding is also dense, we only need to show that the space of smooth functions with compact support, $C_c^\infty(\mathbb{R}^{n})$, is dense in both $V_{\eta, p}$ and $H_{\eta,p}$. 
	
	Consider a function $\chi\in C_c^\infty(\mathbb{R}^{n})$, $\chi(x)\in[0,1]$, $\forall\ x\in\mathbb{R}^{n}$, which satisfies
	\begin{equation}
		\chi(x) = \left\{\begin{aligned}
			&1,\ |x|\leq 1, \\
			&0,\ |x|\geq 2.
		\end{aligned}\right.
	\end{equation}
	Let us define $\chi_R(x)=\chi(x/R)$, with a constant $R > 0$. Then we have
	\begin{equation}
		\nabla(\chi_{R}v) - \nabla v = (\chi_{R} - 1)\nabla v + v\nabla \chi_{R}.
	\end{equation}
	As $R\rightarrow\infty$, according to the dominated convergence theorem,
	\begin{equation}
		\lim\limits_{R\rightarrow\infty}\left\|(1+|x|^{2})^{p/2}(\chi_{R}v - v)\right\|_{\eta,p} = 0,  \ \lim\limits_{R\rightarrow\infty}\left\|(\chi_{R} - 1)\nabla v\right\|_{\eta,p} = 0, 
	\end{equation}
	and for some constant $C > 0$,
	\begin{equation}
		|v\nabla\chi_R|\le\frac{C}{R}|v|, \lim\limits_{R\rightarrow\infty}\|v\nabla\chi_{R}\|_{\eta,p}\leq \|v\|_{\eta,p}\lim\limits_{R\rightarrow\infty}\frac{C}{R} = 0. 
	\end{equation}
	Thus, the compactly supported elements in $V_{\eta,p}$ are dense. 
	
	Since the weight function $w_{\eta,p}(x)$ and $(1+|x|^{2})^{p/2}$ are both bounded on compact sets, for elements with compact support, the $V_{\eta,p}$ norm is equivalent to the norms in classical Sobolev spaces, and thus, the density of $C_c^\infty(\mathbb{R}^n)$ in the space of compactly supported elements of $V_{\eta,p}$ follows from standard mollification and $C_c^\infty$ approximation arguments.
	
	The same truncation-mollification procedure also yields the density of $C_c^\infty(\mathbb{R}^{n})$ in $H_{\eta,p}$. Therefore, the embedding $V_{\eta,p} \hookrightarrow H_{\eta,p}$ is also dense and we obtain the Gelfand triple:
	\begin{equation}
		V_{\eta,p}\hookrightarrow H_{\eta,p}\cong H_{\eta,p}'\hookrightarrow V_{\eta,p}'.
	\end{equation}
\end{proof}

%=====================================================================

\subsection{The weighted variational framework}
Based on the weighted spaces $H_{\eta,p}$ and $V_{\eta,p}$ defined in Section \ref{sec:3.1}, a weighted variational problem corresponding to the robust DMZ equation \eqref{eq:rdmz} can be formulated, and the weak solution of \eqref{eq:rdmz} can be defined by the weighted variational problem.

Given an observation path $Y = \{Y_{t}:0\leq t\leq T\}\in C([0,T];\R^m)$, let us define the time-varying bilinear form $a_{Y,\eta,p}:[0,T]\times V_{\eta,p}\times V_{\eta,p}\rightarrow\mathbb{R}$ by
\begin{equation}\label{eq:form}
	\begin{aligned} 
		a_{Y,\eta,p}(t;u,v):=& \frac12\int_{\mathbb{R}^{n}}\nabla u\cdot\nabla v\,w_{\eta,p}\,dx
		+\frac12\int_{\mathbb{R}^{n}} v\,\nabla u\cdot\nabla U_{\eta,p}\,w_{\eta,p}\,dx \\
		&-\int_{\mathbb{R}^{n}} (b_Y\cdot\nabla u)\,v\,w_{\eta,p}\,dx
		-\int_{\mathbb{R}^{n}} P_Y\,u\,v\,w_{\eta,p}\,dx,
	\end{aligned} 
\end{equation}
where $b_{Y}(t,x)$ and $P_{Y}(t,x)$ are defined in \eqref{eq:coeffs} and 
$$U_{\eta,p}(x) = 2\eta (1+|x|^{2})^{\frac{p+1}{2}}$$
is the exponential part of the weighted function $w_{\eta,p}$. 

Important regularity properties of the time-varying bilinear form $a_{Y,\eta,p}$ under mild assumptions are summarized in the following theorem.
\begin{theorem}
	\label{thm:2.2}
	Consider a continuous observation path $Y = \{Y_{t}:0\leq t\leq T\}\in C([0,T];\mathbb{R}^{m})$, and fixed constants $\eta > 0$ and $p\geq 1$. Assume that 
	\begin{enumerate}[label=\textup{(A\arabic*)}]
		\item\label{A2} There exists a constant $C_0>0$ such that for all $x\in\R^n$,
		\begin{gather*}
			|f(x)|\le C_0(1+|x|^{2})^{p/2},\quad |\nabla f(x)|\le C_0(1+|x|^{2})^{(p-1)/2},\\
			|h(x)|\le C_0(1+|x|^{2})^{p/2},\quad |\nabla h(x)|\le C_0(1+|x|^{2})^{(p-1)/2},\\ |\Delta h(x)|\leq C_{0}(1+|x|^{2})^{(2p-1)/2}.
		\end{gather*}
	\end{enumerate}
	then, the time-varying bilinear form $a_{Y,\eta,p}$ defined in \eqref{eq:form} satisfies the following properties:
	\begin{enumerate}
		\item \textbf{Boundedness and continuity}: There exists a constant $C=C(n,p,\eta,Y)$ such that
		\[
		|a_{Y,\eta,p}(t;u,v)|\;\le\;C\,\norm{u}_{V_{\eta,p}}\norm{v}_{V_{\eta,p}}\qquad\forall u,v\in V_{\eta,p},\ t\in[0,T],
		\]
		and $t\mapsto a_{Y,\eta,p}(t;u,v)$ is continuous for fixed $u,v\in V_{\eta,p}$.
		\item \textbf{Semi-coercivity}: If we further assume that:
		\begin{enumerate}[label=\textup{(A\arabic*)},start=2]
			\item\label{A3RDMZ} There exist constants $\beta_{\eta,p}>0$ and $C_{\eta,p}\ge0$ such that for all $x\in\mathbb{R}^{n}$,
			\begin{equation}\label{eq:A3RDMZ-R0}
				\begin{aligned} 
					-\frac12\divg f(x)&-\frac12|h(x)|^2
					+\frac12 f(x)\cdot\nabla U_{\eta,p}(x)\\
					&+\frac14|\nabla U_{\eta,p}(x)|^2+\frac14\Delta U_{\eta,p}(x) \\
					&\le C_{\eta,p}-\beta_{\eta,p}(1+|x|^{2})^{p}.
				\end{aligned} 
			\end{equation}
		\end{enumerate}
		then the bilinear form $a_{Y,\eta,p}$ is semi-coercive, in the sense that it satisfies the G\aa rding inequality:
		\begin{equation}\label{eq:garding}
			\begin{aligned} 
				a_{Y,\eta,p}(t;v,v)\ge&\frac12\norm{\nabla v}_{\eta,p}^2+\frac{\beta_{\eta,p}}{2}\norm{(1+|x|^{2})^{\frac{p}{2}}v}_{\eta,p}^2\\
				&-C\big(1+|Y_t|^{2p}\big)\norm{v}_{\eta,p}^2,\ \forall \ v\in V_{\eta,p}, \ t\in [0,T], 
			\end{aligned} 
		\end{equation}
		holds for some constant $C > 0$. 
	\end{enumerate}
\end{theorem}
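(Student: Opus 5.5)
The plan is to prove both parts by direct pointwise estimates on the coefficients. The growth bounds in \ref{A2} are designed so that every coefficient is dominated by the appropriate power of $(1+|x|^2)^{1/2}$. For semi-coercivity, I will first reduce to $v\in C_c^\infty(\mathbb{R}^n)$ and integrate by parts against the weight.

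\textbf{Step 1 (coefficient bounds).} Set $\rho(x)=(1+|x|^2)^{1/2}$ and $M=\sup_{t\le T}|Y_t|<\infty$. A direct computation gives $\nabla U_{\eta,p}=2\eta(p+1)\rho^{p-1}x$, so $|\nabla U_{\eta,p}|\le C\rho^p$ and $|\Delta U_{\eta,p}|\le C\rho^{p-1}$. From \ref{A2}:
\begin{itemize}
\item $|b_Y|\le C(1+M)\rho^p$;
\item $|\divg f|\le C\rho^{p-1}$;
\item $|h|^2\le C\rho^{2p}$;
\item $|Y_t^\top\Delta h|\le CM\rho^{2p-1}$;
\item $|Y_t^\top\nabla h f|\le CM\rho^{2p-1}$;
\item $|(\nabla h)^\top Y_t|^2\le CM^2\rho^{2p-2}$.
\end{itemize}
Hence $|P_Y|\le C(1+M^2)\rho^{2p}$.

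\textbf{Step 2 (boundedness and continuity).} Each term of $a_{Y,\eta,p}$ is estimated by Cauchy--Schwarz in $H_{\eta,p}$. For the two first-order terms, write the integrand as $|\nabla u|\cdot(\rho^p|v|)$ times a bounded factor, which gives a bound by $\norm{\nabla u}_{\eta,p}\norm{\rho^pv}_{\eta,p}$. For the zeroth-order term, write it as $(\rho^p|u|)(\rho^p|v|)$ times a bounded factor. Both bounds are at most $C\norm{u}_{V_{\eta,p}}\norm{v}_{V_{\eta,p}}$. For continuity in $t$, note that $b_Y$ and $P_Y$ are polynomials of degree at most two in $Y_t$ whose coefficients are fixed functions of $x$. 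Since $Y$ is continuous, the integrands converge pointwise as $s\to t$ and are dominated by the integrable majorants just found, so dominated convergence applies.

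\textbf{Step 3 (Gårding inequality, the main computation).} By Lemma~\ref{lem:2.1} and Step 2, both sides of \eqref{eq:garding} are continuous on $V_{\eta,p}$, so it suffices to take $v\in C_c^\infty(\mathbb{R}^n)$. Then $v\nabla v=\tfrac12\nabla(v^2)$ and $\nabla w_{\eta,p}=w_{\eta,p}\nabla U_{\eta,p}$, so for a smooth vector field $g$,
\[
\int g\cdot v\nabla v\,w_{\eta,p}\,dx=-\tfrac12\int v^2(\divg g+g\cdot\nabla U_{\eta,p})\,w_{\eta,p}\,dx .
\]
I apply this identity with $g=\tfrac12\nabla U_{\eta,p}$, with $g=f$, and with $g=(\nabla h)^\top Y_t$, and combine the results with $-P_Y$. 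The $Y$-independent part of the potential then becomes exactly minus the left side of \eqref{eq:A3RDMZ-R0}: the term $+\divg f$ from $-P_Y$ combines with $-\tfrac12\divg f$ from the integration by parts. Assumption \ref{A3RDMZ} therefore gives
\[
a_{Y,\eta,p}(t;v,v)\ge\tfrac12\norm{\nabla v}^2_{\eta,p}+\beta_{\eta,p}\norm{\rho^pv}^2_{\eta,p}-C_{\eta,p}\norm{v}^2_{\eta,p}-\int R_Y v^2w_{\eta,p}\,dx .
\]
The remainder $R_Y$ collects the $Y$-dependent terms: $Y_t^\top\Delta h$, $(\nabla h)^\top Y_t\cdot\nabla U_{\eta,p}$, $Y_t^\top\nabla h f$ and $|(\nabla h)^\top Y_t|^2$. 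By Step 1 it satisfies
\[
|R_Y|\le C|Y_t|\rho^{2p-1}+C|Y_t|^2\rho^{2p-2}.
\]

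\textbf{Step 4 (absorbing $R_Y$).} This is the delicate step, because the bound must be uniform with the stated dependence $|Y_t|^{2p}$. I use Young's inequality with exponents $\tfrac{2p}{2p-1}$ and $2p$:
\[
|Y_t|\rho^{2p-1}\le\varepsilon\rho^{2p}+C_\varepsilon|Y_t|^{2p}.
\]
Similarly, with exponents $\tfrac{p}{p-1}$ and $p$ (and trivially when $p=1$),
\[
|Y_t|^2\rho^{2p-2}\le\varepsilon\rho^{2p}+C_\varepsilon|Y_t|^{2p}.
\]
Choosing $\varepsilon$ so that the total $\varepsilon$-contribution is at most $\beta_{\eta,p}/2$ absorbs it into the coercive term. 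This yields \eqref{eq:garding} with constant $C=\max(C_{\eta,p},C_\varepsilon)$, and the inequality extends to all $v\in V_{\eta,p}$ by density.
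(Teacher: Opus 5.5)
Your proposal is correct and follows essentially the same route as the paper: pointwise coefficient bounds with Cauchy--Schwarz for boundedness, dominated convergence for continuity in $t$, and integration by parts against $w_{\eta,p}=e^{U_{\eta,p}}$ for $v\in C_c^\infty(\R^n)$, which isolates exactly the left side of assumption (A2). The $Y$-dependent remainder is then absorbed by Young's inequality with exponents $\tfrac{2p}{2p-1},2p$ and $\tfrac{p}{p-1},p$, and the inequality extends to $V_{\eta,p}$ by density. One small point: the two $Y_t^\top\Delta h$ contributions, from $\tfrac12\divg b_Y$ and from $-P_Y$, cancel exactly, so that term does not actually appear in $R_Y$; keeping it in your bound is harmless.
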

\begin{proof}
	(i) According to the definition of the functions $b_{Y}(t,x)$ and $P_{Y}(t,x)$ in \eqref{eq:coeffs}, Assumption \ref{A2} yields the following pointwise bounds:
	\begin{equation}\label{eq:coeff-bounds}
		\begin{aligned} 
			&|\nabla U_{\eta,p}|\le2\eta(p+1)(1+|x|^{2})^{p/2},\qquad
			|b_Y(t,\cdot)|\le C_0(1+|Y_t|)(1+|x|^{2})^{p/2},\\
			&|P_Y(t,\cdot)|\le C_0\big(1+|Y_t|^2\big)(1+|x|^{2})^{p},
		\end{aligned} 
	\end{equation}
	which hold for all $x\in\mathbb{R}^{n}$. 
	
	Since $Y = \{Y_{t}:0\leq t\leq T\}$ is a continuous observation path and thus bounded in $[0,T]$, let us denote the supremum norm of the continuous path in $[0,T]$ by
	\begin{equation}
		M_{T}(Y) = \sup\limits_{t\in[0,T]}|Y_{t}| < \infty,
	\end{equation}
	then, by \eqref{eq:coeff-bounds} and the Cauchy-Schwartz inequality, we have
	\begin{align*}
		|a_{Y,\eta,p}(t;u,v)|&\le\tfrac12\norm{\nabla u}_{\eta,p}\norm{\nabla v}_{\eta,p}
		\\
		&+\big(\eta(p+1)+C_0(1+M_T(Y))\big)\norm{\nabla u}_{\eta,p}\norm{(1+|x|^{2})^{p/2}v}_{\eta,p}\\
		&\qquad+C_0(1+M_T(Y)^2)\norm{(1+|x|^{2})^{p/2}u}_{\eta,p}\norm{(1+|x|^{2})^{p/2}v}_{\eta,p},
	\end{align*}
	and each term on the right-hand side can be bounded by a finite multiple of the value $\|u\|_{V_{\eta,p}}\|v\|_{V_{\eta,p}}$.
	
	The continuity of $a_{Y,\eta,p}$ in $t$ follows from the continuity of $t\mapsto Y_t$ and the dominated convergence theorem. In fact, the continuity of $Y_t$ renders the integrand in the definition \eqref{eq:form} of $a_{Y,\eta,p}$ continuous with respect to $t$, and the domination functions can be obtained by estimating $b_Y(t,x)$ and $P_Y(t,x)$ by the bounds in \eqref{eq:coeff-bounds}.
	
	(ii) If $v\in C_{c}^{\infty}(\mathbb{R}^{n})$, according to the definition \eqref{eq:form} and the integration-by-part formula, we have
	\begin{equation}
		\begin{aligned} 
			&a_{Y,\eta,p}(t,v,v)  \\
			=&\frac{1}{2}\int_{\mathbb{R}^{n}}\nabla v\cdot \nabla v w_{\eta,p}dx + \frac{1}{2}\int_{\mathbb{R}^{n}}v\nabla v\nabla U_{\eta,p}w_{\eta,p} dx \\
			&-\int_{\mathbb{R}^{n}}(b_{Y}\cdot \nabla v) vw_{\eta,p}dx - \int_{\mathbb{R}^{n}} P_{Y}v^{2}w_{\eta,p} dx \\
			=&\frac{1}{2}\|\nabla v\|_{\eta,p}^{2} + \tfrac14\int_{\mathbb{R}^{n}}\nabla(v^2)\cdot\nabla\big(e^{U_{\eta,p}}\big)\,dx-\tfrac12\int_{\mathbb{R}^{n}} b_Y\cdot\nabla(v^2)\,w_{\eta,p} dx - \int_{\mathbb{R}^{n}} P_{Y}v^{2}w_{\eta,p} dx \\
			=& \frac{1}{2}\|\nabla v\|_{\eta,p}^{2}-\tfrac14\int_{\mathbb{R}^{n}} v^2\,\Delta\big(e^{U_{\eta,p}}\big)\,dx + \tfrac12\int_{\mathbb{R}^{n}} v^2\,\divg(b_Yw_{\eta,p})\,dx- \int_{\mathbb{R}^{n}} P_{Y}v^{2}w_{\eta,p} dx \\
			=& \frac{1}{2}\|\nabla v\|_{\eta,p}^{2}	-\int_{\mathbb{R}^{n}}\Big(\tfrac14|\nabla U_{\eta,p}|^2+\tfrac14\Delta U_{\eta,p}-\tfrac12\divg b_Y-\tfrac12 b_Y\cdot\nabla U_{\eta,p}+P_Y\Big)v^2w_{\eta,p}\,dx.
		\end{aligned} 
		\label{eq:24}
	\end{equation}
	Let us denote
	\begin{equation}
		R_{Y,\eta,p} := \tfrac14|\nabla U_{\eta,p}|^2+\tfrac14\Delta U_{\eta,p}-\tfrac12\divg b_Y-\tfrac12 b_Y\cdot\nabla U_{\eta,p}+P_Y.
	\end{equation}
	
	With the definition \eqref{eq:coeffs} of $b_{Y}(t,x)$ and $P_{Y}(t,x)$, we have
	\begin{equation}
		\begin{aligned}
			R_{Y,\eta,p}(t,x) =& -\frac12\divg f-\frac12|h|^2+\frac12f\cdot\nabla U_{\eta,p}+\frac14|\nabla U_{\eta,p}|^2+\frac14\Delta U_{\eta,p} \\
			&-f(x)\cdot(\nabla h(x))^\top Y_t +\frac12 |(\nabla h(x))^\top Y_t|^2 -\frac12 (\nabla h(x))^\top Y_t\cdot\nabla U_{\eta,p}(x).
		\end{aligned}
	\end{equation}
	Notice that
	\begin{equation}
		|\nabla U_{\eta,p}| = 2\eta (p+1)(1+|x|^{2})^{(p-1)/2} |x|\leq 2\eta (p+1)(1+|x|^{2})^{\frac{p}{2}}.
	\end{equation}
	Using the assumptions \ref{A2} and \ref{A3RDMZ}, we can obtain an upper bound of $R_{Y,\eta,p}$:
	\begin{equation}
		\begin{aligned}
			R_{Y,\eta,p}(t,x)\leq & C_{\eta,p} - \beta_{\eta,p}(1+|x|^{2})^{p} + C_{0}^{2}(1+|x|^{2})^{\frac{2p-1}{2}}|Y_{t}| + \frac{1}{2} C_{0}^{2}(1+|x|^{2})^{p-1} |Y_{t}|^{2} \\
			& + C_{0}\eta(p+1)(1+|x|^{2})^{\frac{2p-1}{2}}|Y_{t}| 
		\end{aligned}
	\end{equation}
	According to Young's inequality, (with the fact that for $p\geq 1$, the orders $k = \frac{2p}{2p-1}$ and $l = 2p$ satisfies $\frac{1}{k} + \frac{1}{l} = 1$),
	\begin{equation}
		\begin{aligned}
			&(C_{0}^{2}+ C_{0}\eta (p+1))(1+|x|^{2})^{\frac{2p-1}{2}}|Y_{t}| \leq  \frac{\beta_{\eta,p}}{4}(1+|x|^{2})^{p} + C  |Y_{t}|^{2p},
		\end{aligned}
	\end{equation}
	for some constant $C> 0$ depending on $\eta$ and $p$. 
	
	Also, with the fact that for $p > 1$, the orders $k = \frac{p}{p-1}$ and $l = p$ satisfies $\frac{1}{k} + \frac{1}{l} = 1$, according to Young's inequality, we have
	\begin{equation}
		\frac{1}{2} C_{0}^{2}(1+|x|^{2})^{p-1} |Y_{t}|^{2} \leq \frac{\beta_{\eta,p}}{4}(1+|x|^{2})^{p} + C |Y_{t}|^{2p}
		\label{eq:30}
	\end{equation}
	for some constant $C> 0$ depending on $\eta$ and $p$. Moreover, the inequality \eqref{eq:30} also holds for $p = 1$.
	
	Therefore, 
	\begin{equation}
		\begin{aligned} 
		R_{Y,\eta,p}(t,x) &\leq  C_{\eta,p} - \beta_{\eta,p}(1+|x|^{2})^{p} + \frac{\beta_{\eta,p}}{2}(1+|x|^{2})^{p} + C  |Y_{t}|^{2p} \\
		&\leq C(1+|Y_{t}|^{2p}) - \frac{\beta_{\eta,p}}{2}(1+|x|^{2})^{p}. 
		\end{aligned} 
		\label{eq:31}
	\end{equation}
	
	Taking the estimation \eqref{eq:31} back to \eqref{eq:24}, we obtain the G\aa rding inequality:
	\begin{equation}
		\begin{aligned} 
			a_{Y,\eta,p}(t,v,v)& \geq \frac{1}{2}\|\nabla v\|_{\eta,p}^{2} - \int_{\mathbb{R}^{n}}\left(C(1+|Y_{t}|^{2p}) - \frac{\beta_{\eta,p}}{2}(1+|x|^{2})^{p}\right) v^{2}w_{\eta,p}dx\\
			&=\frac12\norm{\nabla v}_{\eta,p}^2+\frac{\beta_{\eta,p}}{2}\norm{(1+|x|^{2})^{\frac{p}{2}}v}_{\eta,p}^2-C\big(1+|Y_t|^{2p}\big)\norm{v}_{\eta,p}^2,
		\end{aligned}
		\label{eq:32}
	\end{equation}
	which holds for all $v\in C_{c}^{\infty}(\mathbb{R}^{n})$. 
	
	For general $v\in V_{\eta,p}$, the G\aa rding inequality \eqref{eq:32} holds, because $C_{c}^{\infty}(\mathbb{R}^{n})$ is dense in $V_{\eta,p}$ according to Lemma \ref{lem:2.1} and the dominated convergence theorem according to the boundedness of $a_{Y,\eta,p}$ which is already proved in (i). 
\end{proof}

With the bilinear form $a_{Y,\eta,p}$, a weak (or variational) solution to the robust DMZ equation \eqref{eq:rdmz} in the weighted space $V_{\eta,p}$ can be defined as follows.
\begin{definition}[Weighted variational solution of the robust DMZ equation]
	\label{def:2.3}
	For given constants  $\eta > 0$ and $p\geq 1$, a weak  (or variational) solution of \eqref{eq:rdmz} on $[0,T]$ for the observation path $Y = \{Y_{t}:0\leq t\leq T\}$ is a function
	\[
	u\in L^2(0,T; V_{\eta,p})\cap C([0,T]; H_{\eta,p})\quad\text{with}\quad \partial_tu\in L^2(0,T; V_{\eta,p}')
	\]
	such that $u(0)=u_0$ and, for a.e.\ $t\in[0,T]$ and all $v\in V_{\eta,p}$,
	\begin{equation}\label{eq:varid}
		\dual{\partial_tu(t)}{v}_{V_{\eta,p}',V_{\eta,p}}+a_{Y,\eta,p}(t;u(t),v)=0,
	\end{equation}
	where $a_{Y,\eta,p}$ is the weighted time-varying bilinear form associated with \eqref{eq:rdmz}, defined in \eqref{eq:form} above.
\end{definition}

In the next section, we will study the well-posedness of the weighted variational solution of the robust DMZ equation, based on the properties of the bilinear form proved in Theorem \ref{thm:2.2}.

\section{Well-posedness of the weighted variational solution to the robust DMZ equation}\label{sec:3}

In this section, we establish the well-posedness of the weighted variational solution to the robust DMZ equation, which includes the existence, uniqueness, positivity, robustness with respect to the observation paths. The main result is stated in the following theorem.
\begin{theorem}[Well-posedness of the robust DMZ equation]\label{thm:rdmz}
	For fixed constants $\eta > 0$ and $p \geq 1$, assume that the initial value $u_0$ of the robust DMZ equation \eqref{eq:rdmz} belongs to the weighted space $H_{\eta,p}$. Then, under the regularity assumptions \textup{\ref{A2}} and  \textup{\ref{A3RDMZ}} as in Theorem \ref{thm:2.2}, the weighted variational solution of the robust DMZ equation \eqref{eq:rdmz}, which is introduced in Definition \ref{def:2.3}, is well-posed. That is:
	\begin{enumerate}[label=\textup{(\roman*)}]
		\item\label{rdmz-exist} For every continuous observation path $Y = \{Y_{t}:0\leq t\leq T\}\in C([0,T];\R^m)$ there exists a unique weighted variational solution $u=u^Y$ of \eqref{eq:rdmz}, and with $\beta_{\eta,p}$ from assumption \textup{\ref{A3RDMZ}}, we have the energy estimation:
		\begin{equation}\label{eq:energy-est}
			\begin{aligned} 
			\sup_{t\in[0,T]}\norm{u(t)}_{\eta,p}^2
			&+\int_0^T\Big(\norm{\nabla u(s)}_{\eta,p}^2+\beta_{\eta,p}\norm{(1+|x|^{2})^{p}u(s)}_{\eta,p}^2\Big)ds
			\\
			&\leq C\,\norm{u_0}_{\eta,p}^2 .
			\end{aligned} 
		\end{equation}
		where $C=C(n,m,\eta,p,Y,T)> 0$ is a generic constant. 
		\item\label{rdmz-pos} If the initial value $u_0\ge0$ a.e. and $\int_{\R^n} u_{0}(x) dx = 1$, then $u(t,\cdot)\ge0$ $a.e.$, and $\int_{\R^n} u(t,x)dx > 0$ for every $t\in[0,T]$.
		\item\label{rdmz-stab} The solution map is locally Lipschitz: for every $M>0$ there is $C_{M,T}>0$ such that for all observation paths $Y^{1}$ and $Y^{2}$ with supremum norm $$\sup\limits_{t\in[0,T]}\max\{|Y_{t}^{1}|, |Y_{t}^{2}|\}\le M,$$
		\begin{equation} 
			\begin{aligned} 
		\sup_{t\in[0,T]}\norm{u^{Y^1}(t)-u^{Y^2}(t)}_{\eta,p}^2&+\int_0^T\norm{u^{Y^1}(s)-u^{Y^2}(s)}_{V_{\eta,p}}^2\,ds \\
		&\leq C_{M,T}\,\norm{u_0}_{\eta,p}^2\,\sup_{t\in[0,T]}\big|Y^1_t-Y^2_t\big|^2.
		\end{aligned} 
		\end{equation} 
	\end{enumerate}
\end{theorem}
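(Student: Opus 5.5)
The plan is to prove part (i) with the J.-L. Lions theorem for time-dependent forms on the Gelfand triple of Lemma~\ref{lem:2.1}, then get (ii) and (iii) from energy estimates and a positivity argument.

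\textbf{Part (i).} The Gårding inequality of Theorem~\ref{thm:2.2}(ii) has the time-dependent shift $\lambda(t)=C(1+|Y_t|^{2p})$. This shift is bounded by $\lambda:=C(1+M_T(Y)^{2p})$, so I use the substitution $\tilde u(t)=e^{-\lambda t}u(t)$. The form $\tilde a(t;u,v)=a_{Y,\eta,p}(t;u,v)+\lambda\ip{u}{v}_{\eta,p}$ is then uniformly coercive on $V_{\eta,p}$. This holds because $\frac12\norm{\nabla v}^2+\frac{\beta_{\eta,p}}2\norm{(1+|x|^2)^{p/2}v}^2\ge c\norm{v}^2_{V_{\eta,p}}$. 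By Theorem~\ref{thm:2.2}(i) the form is bounded and measurable (indeed continuous) in $t$. Lions' theorem then gives a unique $\tilde u\in L^2(0,T;V_{\eta,p})$ with $\partial_t\tilde u\in L^2(0,T;V'_{\eta,p})$. The standard Lions--Magenes lemma gives $\tilde u\in C([0,T];H_{\eta,p})$ and the identity $\frac{d}{dt}\norm{u}^2_{\eta,p}=2\dual{\partial_t u}{u}$. Transforming back yields existence and uniqueness for \eqref{eq:varid}.

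For the energy estimate, I test with $v=u(t)$ and use the Gårding inequality:
\[
\tfrac{d}{dt}\norm{u}^2_{\eta,p}+\norm{\nabla u}^2_{\eta,p}+\beta_{\eta,p}\norm{(1+|x|^2)^{p/2}u}^2_{\eta,p}\le 2\lambda\norm{u}^2_{\eta,p}.
\]
Gronwall's inequality then gives the sup bound, and integrating in time gives the dissipation terms. The estimate written with $(1+|x|^2)^{p}$ is, I believe, meant as the $V_{\eta,p}$ weight $(1+|x|^2)^{p/2}$, and that is the version I will prove.

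\textbf{Part (ii).} For positivity I test with $v=u^-=\max(-u,0)$. This is admissible because $u^-\in V_{\eta,p}$, with $\nabla u^-=-\mathbf 1_{\{u<0\}}\nabla u$, and because the chain rule $\frac{d}{dt}\norm{u^-}^2=-2\dual{\partial_tu}{u^-}$ holds for Gelfand-triple solutions by a standard regularization argument. Since $a(t;u,u^-)=-a(t;u^-,u^-)$ (the form is local), the Gårding inequality and Gronwall give $\norm{u^-(t)}\le e^{\lambda t}\norm{u_0^-}=0$.

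Strict positivity of the mass $\int u(t)\,dx$ is the step I expect to be hardest. Here $u\in H_{\eta,p}\subset L^1$ because $w_{\eta,p}^{-1}\in L^1$. I would argue by contradiction. Backward uniqueness is not directly available, so I would use a lower bound instead. I would compare $u$ on a ball $B_R$ with the solution of the equation with Dirichlet data and bounded coefficients on $B_R$. Alternatively, I would test with a fixed positive weight $\phi\in C_c^\infty$ and derive $\frac{d}{dt}\int u\phi\ge -K\int u\phi-(\text{terms controlled by }\int_{B_{2R}} u)$.

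The cleanest route I see is a weak Harnack / strong maximum principle for nonnegative weak supersolutions on bounded cylinders, where the coefficients are bounded. Such a result gives: if $u(t)=0$ a.e. for some $t>0$, then $u\equiv0$ on $(0,t)\times B_R$ for every $R$. Letting $t\downarrow0$ and using continuity in $H_{\eta,p}$ would then give $u_0=0$, contradicting $\int u_0=1$. The case $t=0$ is immediate.

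\textbf{Part (iii).} Set $e=u^{Y^1}-u^{Y^2}$. It solves
\[
\dual{\partial_t e}{v}+a_{Y^1}(t;e,v)=a_{Y^2}(t;u^{Y^2},v)-a_{Y^1}(t;u^{Y^2},v),\qquad e(0)=0.
\]
The right-hand side involves $b_{Y^1}-b_{Y^2}=(\nabla h)^\top(Y^1_t-Y^2_t)$ and $P_{Y^1}-P_{Y^2}$. The latter is linear in $Y^1_t-Y^2_t$ with coefficients bounded, via \ref{A2}, by $C(1+M)|Y^1_t-Y^2_t|(1+|x|^2)^{p}$; the quadratic term is handled by factoring a difference of squares.

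This gives
\[
|\text{RHS}|\le C_M|Y^1_t-Y^2_t|\,\norm{u^{Y^2}}_{V_{\eta,p}}\norm{v}_{V_{\eta,p}}.
\]
I then test with $v=e$, apply Gårding with $\lambda_M$, absorb the RHS via Young's inequality, and apply Gronwall. Finally, the energy estimate (i) gives $\int_0^T\norm{u^{Y^2}}^2_{V_{\eta,p}}\le C_M\norm{u_0}^2_{\eta,p}$, with constants depending only on $M$ and $T$. This yields the claimed bound.
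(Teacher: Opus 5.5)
Your proposal is correct. For part (i), the nonnegativity half of (ii), and part (iii), it is the paper's argument. That means the constant shift $\lambda=C(1+M_T(Y)^{2p})$ reducing to the coercive Lions theorem, which the paper proves by Rothe's method in the appendix. It means testing with $u^-$ and using $a_{Y,\eta,p}(t;u,u^-)=-a_{Y,\eta,p}(t;u^-,u^-)$. Your appeal to the standard chain rule for $\norm{u^-(t)}_{\eta,p}^2$ is a better justification than the paper's formula $\partial_t u^-=-\mathbf 1_{\{u<0\}}\partial_t u$, which is not meaningful in $V_{\eta,p}'$. Finally, it means closing the difference equation with Young, Gronwall, and the fact that the energy constant depends on $Y$ only through $M$. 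Your reading of the energy estimate is also right: the paper's proof only produces the weight $(1+|x|^2)^{p/2}$, i.e.\ the $V_{\eta,p}$ norm, and that is what the energy method gives.

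The genuine difference is the strict positivity of $\int_{\R^n}u(t,x)\,dx$. The paper picks a closed ball $B_r(x_0)$ on which $u_0>0$ and applies the strong maximum principle to an initial--boundary value problem on that ball. You instead argue by contradiction: you propagate $u(t)=0$ backward in time with the (weak) Harnack inequality until it forces $u_0=0$.

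Your route is the sounder one, because it uses only $u_0\not\equiv 0$. The paper's first step fails for general $u_0\in H_{\eta,p}$: a normalized indicator of a bounded closed nowhere dense set of positive measure is not positive a.e.\ on any ball. The paper also leaves implicit the comparison between the whole-space solution and the ball problem.

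The cost of your route is two routine checks that you should write out.
\begin{enumerate}
\item Test \eqref{eq:varid} with $v=\phi\,w_{\eta,p}^{-1}$, $\phi\in C_c^\infty(B_R)$. The two $\nabla U_{\eta,p}$ terms then cancel, so $u$ is an ordinary local weak solution of $\partial_t u=\tfrac12\Delta u+b_Y\cdot\nabla u+P_Y u$. By (A1) its coefficients are bounded on $[0,T]\times B_R$, so the Aronson--Serrin/Trudinger theory applies.
\item The essential infimum of $u$ over a top cylinder $(t-r^2,t)\times B_r(x_0)$ vanishes, because $\norm{u(s)}_{L^2(B_r(x_0))}\to 0$ as $s\uparrow t$. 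After that, the usual chaining argument covers all of $(0,t)\times\R^n$, and continuity in $H_{\eta,p}$ at $s=0$ gives $u_0=0$.
\end{enumerate}
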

\begin{proof}
	(i) The existence and uniqueness of the weighted variational solution follow from the boundedness and semi-coercivity of the bilinear form $a_{Y,\eta,p}$, as proved in Theorem \ref{thm:2.2}. This is in fact a classical result due to Lions and Magenes \cite[Ch.~3, Sect.~1 and 4]{LionsMagenes72}. For the reader's convenience, we provide a modern formulation of this classical theorem for general Gelfand triples in the Appendix.
	
	For the energy estimation \eqref{eq:energy-est}, firstly, according to the G\aa rding inequality \eqref{eq:garding}, we have for a.e.\ $t\in[0,T]$,
	\begin{equation} 
		\begin{aligned}
			\frac12&\frac{d}{dt}\norm{u(t)}_{\eta,p}^2
			=\dual{\partial_tu(t)}{u(t)}_{V_{\eta,p}',V_{\eta,p}}=-a_{Y,\eta,p}(t;u(t),u(t))\\
			&\le-\frac12\norm{\nabla u(t)}_{\eta,p}^2-\frac{\beta_{\eta,p}}{2}\norm{(1+|x|^{2})^{\frac{p}{2}}u(t)}_{\eta,p}^2
			+C\big(1+|Y_t|^{2p}\big)\norm{u(t)}_{\eta,p}^2 .
		\end{aligned}
		\label{eq:34}
	\end{equation}
	Therefore, 
	\begin{equation}
		\frac{d}{dt}\norm{u(t)}_{\eta,p}^2\leq 2C\big(1+|Y_t|^{2p}\big)\norm{u(t)}_{\eta,p}^2 ,
	\end{equation}
	and according to Gronwall's inequality, 
	\begin{equation} 
		\begin{aligned} 
			\norm{u(t)}_{\eta,p}^2&\le \exp\left(2C \int_{0}^{t}(1+|Y_{s}|^{2p})ds\right)\norm{u_0}_{\eta,p}^2.
		\end{aligned} 
		\label{eq:37}
	\end{equation}
	In the meanwhile, with \eqref{eq:37} we may integrate the differential inequality \eqref{eq:34} over $[0,T]$, and obtain
	\begin{equation} 
		\begin{aligned}
			\int_0^T&\Big(\norm{\nabla u}_{\eta,p}^2+\beta_\eta\norm{(1+|x|^{2})^{\frac{p}{2}}u(t)}_{\eta,p}^2\Big)dt\\
			&\le\norm{u_0}_{\eta,p}^2+2C\int_0^T(1+|Y_t|^{2p})\norm{u(t)}_{\eta,p}^2\,dt\\
			&\le\norm{u_0}_{\eta,p}^2+\norm{u_0}_{\eta,p}^2\int_{0}^{T}2C(1+|Y_{t}|^{2p})\exp\left(2C \int_{0}^{t}(1+|Y_{s}|^{2p})ds\right)dt \\
			&= \norm{u_0}_{\eta,p}^2 + \norm{u_0}_{\eta,p}^2 \left(\exp\left(2C \int_{0}^{T}(1+|Y_{t}|^{2p})dt\right) - 1\right)
		\end{aligned}
		\label{eq:38}
	\end{equation} 
	Adding the estimations \eqref{eq:37} and \eqref{eq:38} gives the desired energy estimation \eqref{eq:energy-est}.
	
	(ii) The positivity of the weighted variational solution actually results from the maximum principle for the robust DMZ equation. We now proceed to prove this positivity. 
	
	Let $u^-(t,x)=\max(-u(t,x),0)$ denote the negative part of the solution. Since $u\in L^2(0,T;V_{\eta,p})$ and $\partial_{t}u\in L^{2}(0,T,V_{\eta,p}')$, also $u^-\in L^2(0,T;V_{\eta,p})$ and $\partial_{t}u^-\in L^2(0,T;V_{\eta,p}')$ with 
	\begin{equation} 
		\begin{aligned} 
			&u^- =-\mathbf 1_{\{u<0\}} u=\mathbf 1_{\{u<0\}} u^-,\  \partial_{t} u^-=-\mathbf 1_{\{u<0\}}\partial_{t} u=\mathbf 1_{\{u<0\}}\partial_{t} u^-,\\ &\nabla u^- = -\mathbf{1}_{\{u<0\}}\nabla u=\mathbf{1}_{\{u<0\}}\nabla u^-,
		\end{aligned} 
	\end{equation} 
	
	where $\mathbf 1_{\{u<0\}}$ is the indicator function. Therefore, since $u_{0}\geq 0$ a.e implies $u_{0}^{-} = 0$ a.e., we have
	\begin{equation} 
		\begin{aligned} 
			\frac12\norm{u^-(t)}_{\eta,p}^2&=\int_{0}^{t}\langle \partial_{s}u^-(s),u^-(s)\rangle_{V_{\eta,p}',V_{\eta,p}} ds = -\int_0^t\dual{\partial_su(s)}{u^-(s)}_{V_{\eta,p}',V_{\eta,p}} \,ds \\
			&=\int_0^ta_{Y,\eta,p}(s;u(s),u^-(s))\,ds,
		\end{aligned} 
	\end{equation}
	where in the last equality, we use the fact that $u$ is the weighted variational solution of the robust DMZ equation \eqref{eq:rdmz}. 
	
	Notice that 
	\begin{equation}
		\begin{aligned} 
			a_{Y,\eta,p}&(t;u,u^-)= \frac12\int_{\mathbb{R}^{n}}\nabla u\cdot\nabla u^-\,w_{\eta,p}\,dx
			+\frac12\int_{\mathbb{R}^{n}} u^-\,\nabla u\cdot\nabla U_{\eta,p}\,w_{\eta,p}\,dx \\
			&-\int_{\mathbb{R}^{n}} (b_Y\cdot\nabla u)\,u^-\,w_{\eta,p}\,dx
			-\int_{\mathbb{R}^{n}} P_Y\,u\,u^-\,w_{\eta,p}\,dx \\
			=&-\frac12\int_{\mathbb{R}^{n}}\mathbf 1_{\{u<0\}}\nabla u^-\cdot\nabla u^-\,w_{\eta,p}\,dx
			-\frac12\int_{\mathbb{R}^{n}} \mathbf 1_{\{u<0\}}u^-\,\nabla u^-\cdot\nabla U_{\eta,p}\,w_{\eta,p}\,dx \\
			&+\int_{\mathbb{R}^{n}} \mathbf 1_{\{u<0\}}(b_Y\cdot\nabla u^-)\,u^-\,w_{\eta,p}\,dx
			+\int_{\mathbb{R}^{n}} \mathbf 1_{\{u<0\}}P_Y\,u^-\,u^-\,w_{\eta,p}\,dx \\
			&=-a_{Y,\eta,p}(t,u^-,u^{-}).
		\end{aligned} 
	\end{equation}
	Thus, according to the G\aa rding inequality \eqref{eq:garding},
	\begin{equation} 
		\frac12\norm{u^-(t)}_{\eta,p}^2=-\int_0^ta_{Y,\eta,p}(s;u^-,u^-)\,ds
		\le C\int_0^t\big(1+|Y_s|^{2p}\big)\norm{u^-(s)}_{\eta,p}^2\,ds ,
	\end{equation} 
	and $u^-\equiv0$ follows from the Gronwall's inequality.
	
	The strict positivity of the integral:
	\begin{equation}
		\int_{\mathbb{R}^{n}}u(t,x) dx > 0,\ \forall\ t\in[0,T],
	\end{equation}
	stems from the classical result of strong maximum principle for parabolic equations in bounded domain. 
	
	In fact, since the initial value $u_{0}$ satisfies $\int_{\mathbb{R}^{n}}u_{0}(x) dx = 1$, we may find a point $x_{0}\in\mathbb{R}^{n}$ and a radius $r > 0$, such that $u_{0} > 0$ in the closed ball $B_{r}(x_{0}):\{x\in\mathbb{R}^{n}:|x-x_{0}|\leq r\}$, and we may consider the initial-boundary value problem of \eqref{eq:rdmz} on $B_{r}(x_{0})$, where all the coefficients of the parabolic equation \eqref{eq:rdmz} are bounded smooth functions due to the compactness of $B_{r}(x_{0})$. 
	
	The classical strong maximum principle for parabolic equations (cf. \cite{AronsonSerrin67}, for example) implies that $u(t,x) > 0$, for all $(t,x)\in[0,T]\times B_{r}(x_{0})$, and therefore,
	\begin{equation}
		\int_{\mathbb{R}^{n}}u(t,x)dx \geq \int_{B_{R_0}} u(t,x) dx > 0. 
	\end{equation}
	
	(iii) Let us denote $w=u^{Y^1}-u^{Y^2}$. Subtracting the variational identities \eqref{eq:varid}, we obtain 
	\begin{equation} 
		\dual{\partial_tw}{v}_{V_{\eta,p}',V_{\eta,p}}+a_{Y^1,\eta,p}(t,w,v)=\big(a_{Y^2,\eta,p}-a_{Y^1,\eta,p}\big)(t,u^{Y^2},v),\ \forall\  v\in V_{\eta,p} .
		\label{eq:43}
	\end{equation}
	Firstly, according to the G\aa rding inequality \eqref{eq:garding}, taking $v = w$ in \eqref{eq:43}, we have
	\begin{equation}
		\begin{aligned} 
			&\dual{\partial_tw}{w}_{V_{\eta,p}',V_{\eta,p}}+a_{Y^1,\eta,p}(t,w,w)\\
			&\quad \geq \frac{1}{2}\frac{d}{dt}\norm{w}_{\eta,p}^2+\frac{1}{2}\norm{\nabla w}_{\eta,p}^2  + \frac{\beta_{\eta,p}}{2} \norm{(1+|x|^{2})^{\frac{p}{2}}w}_{\eta,p}^{2}\\
			&\quad- C(1+|Y_{t}^1|^{2p})\|w\|_{\eta,p}^{2} \\
			&\quad\geq   \frac{1}{2}\frac{d}{dt}\norm{w}_{\eta,p}^2 + \gamma\norm{w}_{V_{\eta,p}}^2 - C(1+|Y_{t}^1|^{2p})\|w\|_{\eta,p}^{2},
		\end{aligned} 
		\label{eq:44}
	\end{equation}
	where $\gamma:=\min (\frac{1}{2},\frac{\beta_{\eta,p}}{2})$. 
	
	The two forms $a_{Y^1,\eta,p}$ and $a_{Y^2,\eta,p}$ differ only through the functions $b_Y(t,x)$ and $P_Y(t,x)$. According to the assumption \ref{A2}, for two observation paths $Y^1$ and $Y^2$ with $$\sup\limits_{t\in[0,T]}\max\{|Y_{t}^{1}|, |Y_{t}^{2}|\}\le M,$$
	we have the following estimations:
	\begin{equation} 
		\begin{aligned} 
			|b_{Y^1}(t,x)-b_{Y^2}(t,x)|&=|(\nabla h(x))^\top\delta Y_t|\le C_0(1+|x|^{2})^{\frac{p-1}{2}}|\delta Y_t|,\\
			|P_{Y^1}(t,x)-P_{Y^2}(t,x)|&\le |\delta Y_t| (|\Delta h(x)| + |\nabla h(x) f(x)|) + \frac{1}{2}|\nabla h(x)|^{2}|Y_{t}^{1} + Y_{t}^{2}| |\delta Y_{t}| \\
			&\leq \left((C_{0}^2 + C_{0})(1+|x|^{2})^{\frac{2p-1}{2}} + C_{0}^{2}(1+|x|^{2})^{p-1}M\right) |\delta Y_{t}|,
		\end{aligned} 
	\end{equation} 
	where $\delta Y_{t}:= Y_{t}^{1} - Y_{t}^{2}$. 
	
	Then, the right-hand side of \eqref{eq:43} can be bounded:
	\begin{equation}
		\begin{aligned}
			\biggl|\big(a_{Y^2,\eta,p}&-a_{Y^1,\eta,p}\big)(t,u^{Y^2},v)\biggr| \\
			&= \int_{\mathbb{R}^{n}}((b_{Y^1} - b_{Y^2})\cdot \nabla u^{Y^2}) vw_{\eta,p}dx + \int_{\mathbb{R}^{n}}(P_{Y^1} - P_{Y^2})u^{Y^2} vw_{\eta,p}dx \\
			\leq & C_{0}|\delta Y_{t}|\int_{\mathbb{R}^{n}}|\nabla u^{Y^2}|\cdot (1+|x|^{2})^{\frac{p-1}{2}} |v|w_{\eta,p} dx \\
			&+ \left(C_{0}^2 + C_{0}+ C_{0}^{2}M\right) |\delta Y_{t}|\int_{\mathbb{R}^{n}}(1+|x|^{2})^{\frac{p}{2}}|u^{Y^2}|\cdot (1+|x|^{2})^{\frac{p}{2}}|v|w_{\eta,p} dx \\
			\leq & C(1+M) |\delta Y_{t}|\norm{u^{Y^2}}_{V_{\eta,p}}\norm{v}_{V_{\eta,p}}
		\end{aligned}
	\end{equation}
	where $C > 0$ is a constant. In particular, let us take $v = w$ in \eqref{eq:43}, and we have
	\begin{equation}
		\begin{aligned} 
			&\dual{\partial_tw}{w}_{V_{\eta,p}',V_{\eta,p}}+a_{Y^1,\eta,p}(t,w,w)=\big(a_{Y^2,\eta,p}-a_{Y^1,\eta,p}\big)(t,u^{Y^2},w) \\
			&\quad\leq C(1+M) |\delta Y_{t}|\norm{u^{Y^2}}_{V_{\eta,p}}\norm{w}_{V_{\eta,p}} \leq \frac{\gamma}{2}\|w\|_{V_{\eta,p}}^{2} + C_{M,\gamma}|\delta Y_{t}|^{2}\|u^{Y^2}\|_{V_{\eta,p}}^{2},
		\end{aligned} 
		\label{eq:47}
	\end{equation}
	for some constant $C_{M,\gamma} > 0$, where we used Young's inequality.
	
	Combining \eqref{eq:44} and \eqref{eq:47}, we obtain 
	\begin{equation}
		\frac{1}{2}\frac{d}{dt}\norm{w}_{\eta,p}^2 + \frac{\gamma}{2}\norm{ w}_{V_{\eta,p}}^2 \leq  C(1+|Y_{t}^1|^{2p})\|w\|_{\eta,p}^{2} + C_{M,\gamma}|\delta Y_{t}|^{2}\|u^{Y^2}\|_{V_{\eta,p}}^{2}
		\label{eq:48}
	\end{equation}
	According to Gronwall's inequality and the energy estimation \eqref{eq:energy-est} of $u^{Y^2}$, we have
	\begin{equation} 
		\sup_{t\in[0,T]}\norm{u^{Y^1}(t)-u^{Y^2}(t)}_{\eta,p}^2\le\;C_{M,T}\,\norm{u_0}_{\eta,p}^2\,\sup_{t\in[0,T]}\big|Y^1_t-Y^2_t\big|^2,
		\label{eq:49}
	\end{equation}
	for some constant $C_{M,T} > 0$. The desired result is obtained by inserting \eqref{eq:49} into \eqref{eq:48} and integrating both sides over the time variable $t$.
\end{proof}

\section{The Buffered Spaces, DMZ Equation and Kushner-Stratonovich Equation}\label{sec:4}
In view of the well-posedness results for the robust DMZ equation in weighted Sobolev spaces $H_{\eta,p}$ and $V_{\eta,p}$, which are established in the preceding sections, we are now able to analyze the DMZ equation and the Kushner-Stratonovich equation, but in the \textit{buffered} weighted Sobolev spaces $H_{\eta',p}$ and $V_{\eta',p}$ for some $\eta' \in (0,\eta)$. Observe that the solution $u(t,x)$ of the robust DMZ equation \eqref{eq:rdmz} and the solution $\sigma(t,x)$ of the original stochastic DMZ equation \eqref{eq:dmz} are related by the gauge transformation \eqref{eq:gauge}. Here $\sigma(t,x)$ is the unnormalized conditional density of the conditional distribution $\pi_t$, which itself is the solution of the Kushner-Stratonovich equation \eqref{eq:kse}. 

Here, we call the weighted Sobolev spaces $H_{\eta',p}$ and $V_{\eta',p}$ \textit{buffered} spaces of $H_{\eta,p}$ and $V_{\eta,p}$, respectively, if $0 < \eta' < \eta$. This terminology is motivated by the monotonicity of the weight function with respect to the parameter $\eta$. It is then straightforward that
\begin{equation}
	H_{\eta,p} \subset H_{\eta',p}, \qquad V_{\eta,p} \subset V_{\eta',p}, \qquad \forall \, 0 < \eta' < \eta,
\end{equation}
and the embeddings $H_{\eta,p} \hookrightarrow H_{\eta',p}$ and $V_{\eta,p} \hookrightarrow V_{\eta',p}$ are continuous.

\subsection{Existence of Weak Solutions to the DMZ Equation and Kushner-Stratonovich Equation in Buffered Spaces}
\label{sec:4.1}
Firstly, let us study the existence of a weak solution to the DMZ equation \eqref{eq:dmz} based on the well-posedness result of its robust version. 

Under the assumptions \ref{A2} and \ref{A3RDMZ}, the robust DMZ equation \eqref{eq:rdmz} admits a unique weighted variational solution $u\in L^{2}(0,T;V_{\eta,p})\cap C([0,T];H_{\eta,p})$ with $\partial_{t}u\in L^{2}(0,T;V_{\eta,p}')$. The existence of a solution to the original DMZ equation \eqref{eq:dmz} is now stated and proved in a constructive way.
\begin{theorem}\label{thm:zakai}
	For given parameters $\eta > 0$ and $p\geq 1$, assume that the conditions \ref{A2} and \ref{A3RDMZ} holds, such that for each continuous observation path $Y = \{Y_{t}:0\leq t\leq T\}\in C([0,T],\mathbb{R}^{m})$, a weighted variational solution $u^{Y}\in L^{2}(0,T;V_{\eta,p})\cap C([0,T];H_{\eta,p})$ with $\partial_{t}u^{Y}\in L^{2}(0,T;V_{\eta,p}')$ exists for the robust DMZ equation \eqref{eq:rdmz}. Then, the function $\sigma(t,x)$ obtained by the inverse exponential transformation:
	\begin{equation}
		\sigma(t,x) := e^{Y_{t}^{\top} h(x)}u^{Y}(t,x),\ (t,x)\in [0,T]\times\mathbb{R}^{n},
		\label{eq:51}
	\end{equation}
	is a \textit{buffered} weak solution to the original stochastic DMZ equation \eqref{eq:dmz}, in the sense that, almost surely,
	\begin{enumerate}[label=\textup{(\roman*)}]
		\item\label{zakai-reg} For every $\eta'\in(0,\eta)$, $\sigma\in C([0,T];H_{\eta',p})$.
		\item\label{zakai-weak} For every $t \in [0,T]$ and every $\varphi \in C^2(\mathbb{R}^n)$ with $\varphi$ and its partial derivatives up to second-order growing at most polynomially as $|x| \to \infty$,
		\begin{equation}\label{eq:zakai-weak}
			\sigma_t(\varphi)=\sigma_0(\varphi)+\int_0^t\sigma_s(L\varphi)\,ds+\int_0^t\sigma_s(\varphi h^{\top})dY_s ,
		\end{equation}
		where the last integral is an It\^o integral with respect to the semi-martingale $Y$ and $$\sigma_{t}(\varphi) = \int_{\R^n}\varphi(x)\sigma(t,x)dx.$$
	\end{enumerate}
\end{theorem}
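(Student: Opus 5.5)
Part (i) comes from a pointwise bound on the gauge factor, and part (ii) from applying It\^o's formula to the weak formulation of the robust equation.

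\textbf{Part (i): regularity in the buffered spaces.} Set $M=M_T(Y)=\sup_{t\in[0,T]}|Y_t|$. Assumption \ref{A2} gives $|Y_t^\top h(x)|\le C_0 M(1+|x|^2)^{p/2}$. The exponent of $w_{\eta,p}$ grows like $(1+|x|^2)^{(p+1)/2}$, which is strictly faster. Hence for any $\eta'<\eta$ there is a constant $K=K(\eta,\eta',M)$ with
\[
e^{2Y_t^\top h(x)}\,w_{\eta',p}(x)\le K\,w_{\eta,p}(x),\qquad x\in\R^n,\ t\in[0,T].
\]
To see this, maximize $2C_0M s^{p/2}-2(\eta-\eta')s^{(p+1)/2}$ over $s\ge1$. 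This bound gives $\|\sigma(t)\|_{\eta',p}\le K^{1/2}\|u(t)\|_{\eta,p}$.

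For continuity in $t$, I will write
\[
\sigma(t)-\sigma(s)=e^{Y_t^\top h}\bigl(u(t)-u(s)\bigr)+\bigl(e^{Y_t^\top h}-e^{Y_s^\top h}\bigr)u(s).
\]
The first term is controlled by the bound above together with $u\in C([0,T];H_{\eta,p})$. The second term tends to $0$ by dominated convergence, using the same domination and the continuity of $Y$. Polynomial factors can be absorbed at the cost of shrinking $\eta$ slightly, so I will also record that $(1+|x|^2)^k\sigma$ and $(1+|x|^2)^k\nabla\sigma$ lie in $L^2(0,T;H_{\eta'',p})$ for any $k$ and any $\eta''<\eta$. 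Polynomially growing test functions are then integrable against $\sigma$ and $\nabla\sigma$ via Cauchy--Schwarz with $\int w_{\eta'',p}^{-1}(1+|x|^2)^{k}dx<\infty$.

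\textbf{Part (ii): the weak Zakai identity.} I first take $\varphi\in C_c^\infty$. Write $\sigma_t(\varphi)=\int u(t,x)\psi_t(x)\,dx$ with $\psi_t=\varphi e^{Y_t^\top h}$. The weak form of the robust equation in Definition \ref{def:2.3} is stated with weight $w_{\eta,p}$. Choosing $v=\psi w_{\eta,p}^{-1}$ (compactly supported, hence in $V_{\eta,p}$) converts it to the unweighted distributional form: for each fixed $y\in\R^m$,
\[
\frac{d}{dt}\int u\,\phi\,dx=\int u\,\mathcal{A}_{Y_t}^*\phi\,dx,\qquad \phi\in C_c^\infty,
\]
where $\mathcal A_y^*$ is the formal adjoint of $\tfrac12\Delta+b_y\cdot\nabla+P_y$. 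One checks that the extra term $\tfrac12 v\nabla u\cdot\nabla U$ is exactly what this undoing of the weight produces.

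The map $(t,y)\mapsto\int u(t)\varphi e^{y^\top h}dx=:F(t,y)$ is $C^1$ in $t$ (absolutely continuous) and $C^\infty$ in $y$. The main obstacle is that $u$ depends on the path, so one cannot apply It\^o's formula naively to $F(t,Y_t)$ with $F$ deterministic. Since $u$ is pathwise, I would avoid stochastic calculus on $u$ itself and proceed as follows.
\begin{enumerate}
\item Approximate $Y$ by piecewise-linear paths $Y^k\to Y$ uniformly.
\item For smooth paths, use the chain rule: $\frac{d}{dt}\int u^{Y^k}\varphi e^{Y^{k\top}_t h}dx$ equals the $dt$-terms plus $\int u\varphi e^{Y^\top h}h^\top dx\,\dot Y^k_t$. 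The algebraic identity behind the gauge transform, namely that $\mathcal A_y^*(\varphi e^{y^\top h})$ equals $e^{y^\top h}$ times $L\varphi-\tfrac12|h|^2\varphi$ (up to the Stratonovich correction), gives the Stratonovich form of the Zakai equation for $\sigma^k$.
\item Pass to the limit using Theorem \ref{thm:rdmz}(iii), which gives $u^{Y^k}\to u^{Y}$ in $C([0,T];H_{\eta,p})$, and Wong--Zakai convergence of $\int\sigma^k_s(\varphi h^\top)dY^k_s$ to the Stratonovich integral.
\item Convert Stratonovich to It\^o. The correction $\tfrac12\langle\sigma(\varphi h^\top),Y\rangle$ cancels the $-\tfrac12|h|^2$ term, yielding \eqref{eq:zakai-weak}.
\end{enumerate}

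Step 3 is the delicate part. A cleaner alternative is to treat $s\mapsto\int u(s)\varphi e^{y^\top h}dx$ via a generalized It\^o--Ventzell argument, since $u$ has finite variation in time in $V'$. I would attempt the It\^o--Ventzell route first and fall back on Wong--Zakai if it fails.

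Finally, I extend from $C_c^\infty$ to polynomially growing $C^2$ test functions $\varphi$ by truncation $\varphi\chi_R$. The terms involving $\nabla\chi_R$ and $\Delta\chi_R$ vanish as $R\to\infty$ by the weighted integrability established in part (i). The stochastic integrals converge in probability by dominated convergence for It\^o integrals.
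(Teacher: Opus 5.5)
Your part (i) and the final truncation argument are sound and follow the paper. Your global bound $e^{2Y_t^\top h}w_{\eta',p}\le K\,w_{\eta,p}$, together with a split that keeps the fixed $u(s)$ next to $e^{Y_t^\top h}-e^{Y_s^\top h}$, actually gives a cleaner dominated-convergence argument than the paper's.

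\textbf{The gap: Steps 3--4 of the Wong--Zakai route are circular.} That route is the one you actually spell out for part (ii). There is no off-the-shelf Wong--Zakai theorem for an integrand $\sigma^k_s(\varphi h^\top)$ defined through the PDE solution map $Y\mapsto u^Y$. Moreover, the Stratonovich integral $\int_0^t\sigma_s(\varphi h^\top)\circ dY_s$ and the correction $\tfrac12\langle\sigma(\varphi h^\top),Y\rangle$ only make sense once you know that $s\mapsto\sigma_s(\varphi h^\top)$ is a semimartingale with martingale part $\int_0^\cdot\sigma_s(\varphi hh^\top)\,dY_s$. That is the Zakai identity for the test function $\varphi h$, which is what you are proving. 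The route can be rescued: expand $\int_{t_j}^{t_{j+1}}\sigma^k_s(\varphi h^\top)\dot Y^k_s\,ds$ to second order using the smooth-path identity for $\varphi h$ and $\varphi hh^\top$, the $\mathcal{Y}_{t_j}$-measurability of $u^{Y^k}(t_j)$, and Theorem~\ref{thm:rdmz}(iii) with localization. But that computation is the whole difficulty, and it is not in your proposal.

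\textbf{The fix is the It\^o--Ventzell alternative you mention in one line; it is exactly the paper's proof.} The obstacle that pushed you toward Wong--Zakai is not real. The random field $F(t,y)=\int u^Y(t)\varphi e^{y^\top h}dx$ is adapted: by uniqueness, $u^Y(t)$ depends only on $Y|_{[0,t]}$, since the coefficients at time $t$ involve only $Y_t$, and it does so measurably by the continuity in Theorem~\ref{thm:rdmz}(iii). It is also absolutely continuous in $t$, so it has no martingale part, and It\^o's formula for $F(t,Y_t)$ carries no cross-variation term.

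The paper implements this as a product rule:
\begin{itemize}
\item Set $\psi_t=e^{Y_t^\top h}\varphi\,w_{\eta,p}^{-1}\in C_c^\infty$. Pointwise It\^o gives $d\psi_t=\psi_th^\top dY_t+\tfrac12\psi_t|h|^2dt$.
\item Telescope $(u_t,\psi_t)_{\eta,p}$ over a partition. The sums $\sum_k(u_{t_{k+1}}-u_{t_k},\psi_{t_{k+1}})_{\eta,p}$ converge to $\int_0^t\langle\partial_su_s,\psi_s\rangle\,ds$, because $\partial_tu\in L^2(0,T;V_{\eta,p}')$ and $\psi$ is continuous into $V_{\eta,p}$.
\item The left-point sums $\sum_k(u_{t_k},\psi_{t_{k+1}}-\psi_{t_k})_{\eta,p}$ converge to $\int_0^t(u_s,\psi_sh^\top)_{\eta,p}\,dY_s+\tfrac12\int_0^t(u_s,\psi_s|h|^2)_{\eta,p}\,ds$.
\item Your identity $\mathcal A_y^*(\varphi e^{y^\top h})=e^{y^\top h}(L\varphi-\tfrac12|h|^2\varphi)$ is exact, with no Stratonovich correction. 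It gives $\langle\partial_su_s,\psi_s\rangle=-a_{Y,\eta,p}(s;u_s,\psi_s)=\sigma_s(L\varphi-\tfrac12|h|^2\varphi)$.
\item The two $\tfrac12|h|^2$ terms then cancel, and the It\^o form follows directly.
\end{itemize}
Make this the main argument and drop the Wong--Zakai detour.
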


\begin{proof}
	According to the inverse exponential transformation \eqref{eq:51} and the growth rate condition of $h(x)$ in the assumption \ref{A2}, for a given continuous observation path $Y$, 
	\begin{equation}
		\begin{aligned} 
			\sigma^2(t,x)w_{\eta',p}(x)  &= (u^{Y}(t,x) )^{2}\exp\left(2Y_{t}^{\top}h(x) + 2\eta'(1+|x|^{2})^{\frac{p+1}{2}}\right) \\
			&\leq (u^{Y}(t,x))^{2} \exp\left(2C_{0}M_{T}(Y)(1+|x|^{2})^{\frac{p}{2}} + 2\eta' (1+|x|^{2})^{\frac{p+1}{2}}\right),
		\end{aligned} 
	\end{equation}
	where $M_{T}(Y) = \sup\limits_{t\in[0,T]}|Y_{t}| < \infty$. Thus,
	\begin{equation}
		\sigma^{2}(t,x)w_{\eta',p}(x) \leq (u^{Y}(t,x))^{2}\exp\left(2\eta (1+|x|^{2})^{\frac{p+1}{2}}\right) = (u^{Y}(t,x))^{2}w_{\eta,p}(x),
		\label{eq:54b}
	\end{equation}
	holds for $|x|$ sufficiently large, such that
	\begin{equation}
		(1+|x|^{2})^{\frac{1}{2}}\geq \frac{C_{0}M_{T}(Y)}{\eta - \eta'}. 
		\label{eq:55}
	\end{equation}
	Since $u^{Y}\in C([0,T],H_{\eta,p})$, the right-hand side of \eqref{eq:54b} is integrable, and therefore,
	\begin{equation}
		\lim\limits_{R\rightarrow\infty}\int_{|x|\geq R}\sigma^{2}(t,x)w_{\eta',p}(x) dx \leq \lim\limits_{R\rightarrow\infty}\int_{|x|\geq R}(u^{Y}(t,x))^{2}w_{\eta,p} dx  = 0,
	\end{equation}
	which implies
	\begin{equation}
		\|\sigma(t,\cdot)\|_{\eta',p}^{2} = \int_{\mathbb{R}^{n}}\sigma^{2}(t,x)w_{\eta',p}(x) dx < \infty,\ \forall \ t\in [0,T]. 
	\end{equation}
	Moreover, for every $0\leq  s < t\leq T$,
	\begin{equation}
		\begin{aligned}
			\|\sigma(t,\cdot) - \sigma(s,\cdot)\|_{\eta',p}^{2} =&  \int_{\mathbb{R}^{n}} (\sigma(t,x) - \sigma(s,x))^{2} w_{\eta',p}(x) dx \\
			\leq & 2 \int_{\mathbb{R}^{n}}(u^{Y}(t,x))^{2}\left(\exp(2Y_{t}^{\top}h(x)) - \exp(2Y_{s}^{\top}h(x))\right)w_{\eta',p}(x) dx\\
			&+ 2\int_{\mathbb{R}^{n}}\exp(2Y_{s}^{\top}h(x))(u^{Y}(t,x) - u^{Y}(s,x))^{2} w_{\eta',p}(x) dx.
		\end{aligned}
		\label{eq:58}
	\end{equation}
	For the first integral on the right-hand side of \eqref{eq:58}, since the observation path $Y$ is continuous,
	\begin{equation}
		\lim\limits_{t\rightarrow s}(u^{Y}(t,x))^{2}\left(\exp(2Y_{t}^{\top}h(x)) - \exp(2Y_{s}^{\top}h(x))\right)w_{\eta',p}(x) = 0,\ a.e.\ x\in\mathbb{R}^{n},
	\end{equation}
	and 
	\begin{equation}
		\begin{aligned}
			&|(u^{Y}(t,x))^{2}\exp(2(Y_{t}-Y_{s})^{\top}h(x))w_{\eta',p}(x) |\\
			&\leq (u^{Y}(t,x))^{2}\exp\left(4C_{0}M_{T}(Y)(1+|x|^{2})^{\frac{p}{2}} + 2\eta'(1+|x|^{2})^{\frac{p+1}{2}}\right) \\
			&\leq (u^{Y}(t,x))^{2} w_{\eta,p}(x),\ \forall \ x\in\mathbb{R}^{n}\ s.t., \  (1+|x|^{2})^{\frac{1}{2}}\geq \frac{2C_{0}M_{T}(Y)}{\eta - \eta'}.
		\end{aligned}
	\end{equation}
	According to the dominated convergence theorem, we have
	\begin{equation}
		\lim\limits_{t\rightarrow s}\int_{\mathbb{R}^{n}}(u^{Y}(t,x))^{2}\left(\exp(2Y_{t}^{\top}h(x)) - \exp(2Y_{s}^{\top}h(x))\right)w_{\eta',p}(x) dx = 0. 
		\label{eq:61}
	\end{equation}
	For the second integral on the right-hand side of \eqref{eq:58}, 
	\begin{equation}
		\int_{\mathbb{R}^{n}}\exp(2Y_{s}^{\top}h(x))(u^{Y}(t,x) - u^{Y}(s,x))^{2} w_{\eta',p}(x) dx\leq C\|u^{Y}(t,\cdot) - u^{Y}(s,\cdot)\|_{\eta,p}^{2},
	\end{equation}
	where $C > 0$ is a constant and can be chosen as
	\begin{equation}
		C = \exp\left(2C_{0}M_{T}(Y)\left(\frac{C_{0}M_{T}(Y)}{\eta - \eta'}\right)^{p}\right),
	\end{equation}
	since the estimate \eqref{eq:54b} is valid for sufficiently large $|x|$ in the sense that the inequality \eqref{eq:55} is satisfied.
	
	Because of the continuity result $u^{Y}\in C([0,T];H_{\eta,p})$, we have
	\begin{equation}
		\begin{aligned} 
		\lim\limits_{t\rightarrow s}&\int_{\mathbb{R}^{n}}\exp(2Y_{s}^{\top}h(x))(u^{Y}(t,x) - u^{Y}(s,x))^{2} w_{\eta',p}(x) dx \\
		&\leq C\lim\limits_{t\rightarrow s}\|u^{Y}(t,\cdot) - u^{Y}(s,\cdot)\|_{\eta,p}^{2} = 0.
		\end{aligned} 
		\label{eq:64}
	\end{equation}
	
	Taking \eqref{eq:61} and \eqref{eq:64} back to \eqref{eq:58}, we prove the desired continuity result
	\begin{equation}
		\lim\limits_{t\rightarrow s}\|\sigma(t,\cdot) - \sigma(s,\cdot)\|_{\eta',p}^{2} = 0,\ \forall\ 0\leq s < t\leq T,
	\end{equation}
	and thus $\sigma\in C([0,T];H_{\eta',p})$, for every $\eta'\in (0,\eta)$. 
	
	For part \ref{zakai-weak}, let us first restrict the test function $\varphi$ to smooth functions with compact support in \eqref{eq:zakai-weak}. For a fixed $\varphi\in C_{c}^{\infty}(\mathbb{R}^{n})$, let us define 
	\[
	\psi_t(x) :=e^{Y_t^{\top}h(x)}\varphi(x)\,w_{\eta,p}^{-1}(x),
	\]
	which is also a smooth function with compact support for each $t\in[0,T]$. According to It\^o's formula, for each $x\in\mathbb{R}^{n}$, we have $a.s.$,
	
	\begin{equation}\label{eq:psi-ito}
		\psi_t(x)=\psi_0(x)+\int_0^t\psi_s(x)\,h^{\top}(x) dY_s+\frac12\int_0^t\psi_s(x)\,|h(x)|^2\,ds .
	\end{equation}
	Notice that for each $t\in[0,T]$, the functions $\psi_{t}(x)$, $\psi_{t}(x)h(x)$ and $\psi_{t}(x) |h(x)|^{2}$ are all contained in $C_{c}^{\infty}(\mathbb{R}^{n})$, and therefore, are elements of $V_{\eta,p}$. We claim that the following product rule:
	\begin{equation}\label{eq:product}
		\begin{aligned} 
			\ip{u_t}{\psi_t}_{\eta,p}=\ip{u_0}{\psi_0}_{\eta,p}
			&+\int_0^t\dual{\partial_su_s}{\psi_s}_{V_{\eta,p}',V_{\eta,p}}\,ds
			+\int_0^t\ip{u_s}{\psi_sh^{\top}}_{\eta,p} dY_s \\
			&+\frac12\int_0^t\ip{u_s}{\psi_s|h|^2}_{\eta,p}\,ds .
		\end{aligned} 
	\end{equation}
	holds for all $t\in [0,T]$. 
	In fact, let $0=t_0<\dots<t_N=t$ be a partition of mesh $\delta = t_{k+1} - t_{k}$, $k=0,\cdots,N-1$. Thus,
	\[
	\ip{u_t}{\psi_t}_{\eta,p}-\ip{u_0}{\psi_0}_{\eta,p}
	=\sum_{k=0}^{N-1}\ip{u_{t_{k+1}}-u_{t_k}}{\psi_{t_{k+1}}}_{\eta,p}
	+\sum_{k=0}^{N-1}\ip{u_{t_k}}{\psi_{t_{k+1}}-\psi_{t_k}}_{\eta,p} .
	\]
	Since $\partial_t u\in L^2(0,T;V_{\eta,p}')$ , and the function $s\rightarrow\psi_{s}$ is continuous in $V_{\eta,p}$, the limit of the first sum equals
	\[
	\lim\limits_{\delta\rightarrow 0} \sum_{k=0}^{N-1}\ip{u_{t_{k+1}}-u_{t_k}}{\psi_{t_{k+1}}}_{\eta,p} = \int_0^t\dual{\partial_su_s}{\psi_s}_{V_{\eta,p}',V_{\eta,p}}\,ds,
	\]
	For the second sum, according to the definition of It\^o's integral, we may insert \eqref{eq:psi-ito}:
	\[
	\lim\limits_{\delta\rightarrow 0}\sum_{k=0}^{N-1}\ip{u_{t_k}}{\psi_{t_{k+1}}-\psi_{t_k}}_{\eta,p}
	=\int_0^t\ip{u_{s}}{\psi_sh^{\top}}_{\eta,p} dY_s
	+\frac12\int_0^t\ip{u_{s}}{\psi_s|h|^2}_{\eta,p}\,ds ,
	\]
	
	Next, since $u$ is the weighted variational solution of the robust DMZ equation \eqref{eq:rdmz}, for all $s\in[0,T]$,
	\begin{equation} 
		\begin{aligned} 
			\dual{\partial_su_s}{\psi_s}_{V_{\eta,p}',V_{\eta,p}}= &-a_{Y,\eta,p}(s;u_s,\psi_{s})
			\label{eq:68a}
		\end{aligned} 
	\end{equation} 
	Notice that $\psi_{s}(x)  = e^{Y_{t}^{\top}h(x)}\varphi(x) w_{\eta,p}^{-1}(x)$,
	\begin{equation}
		\begin{aligned}
			\nabla (\psi_{s}(x) w_{\eta,p}(x)) =&\varphi(x) e^{Y_{t}^{\top}h(x)}Y_{t}^{\top}\nabla h(x) + e^{Y_{t}^{\top}h(x)}\nabla\varphi(x), \\
			\Delta (\psi_{s}(x) w_{\eta,p}(x)) = &e^{Y_{t}^{\top}h(x)}Y_{t}^{\top}\nabla h(x) \nabla\varphi(x) + \varphi(x) e^{Y_{t}^{\top}h(x)}\left(Y_{t}^{\top}\Delta h(x) + |\nabla h(x) Y_{t}|^{2}\right) \\
			&+ e^{Y_{t}^{\top}h(x)}\Delta \varphi(x) + e^{Y_{t}^{\top}h(x)}Y_{t}^{\top}\nabla h(x) \nabla \varphi(x). 
		\end{aligned}
		\label{eq:69}
	\end{equation}
	Based on the integration-by-part formula (since $\psi_{s}\in C_{c}^{\infty}$), take \eqref{eq:69} back into the definition of $a_{Y,\eta,p}$, and we have
	\begin{equation}
		\dual{\partial_su_s}{\psi_s}_{V_{\eta,p}',V_{\eta,p}} = \int_{\mathbb{R}^{n}}u(t,x)e^{Y_{t}^{\top}h(x)}\left(\frac{1}{2}\Delta \varphi(x) + f(x)\cdot\nabla \varphi(x) -\frac{1}{2}|h(x)|^{2}\varphi(x)\right)
		\label{eq:70}
	\end{equation}
	Observe that
	\begin{equation} 
		\begin{aligned} 
			\ip{u_t}{\psi_t}_{\eta,p}=\int_{\mathbb{R}^{n}} u(t,x)\,e^{Y_t^{\top}h(x)}\varphi(x)\,dx=\sigma_t(\varphi),
			\\
			\ip{u_t}{\psi_th}_{\eta,p}=\sigma_t(\varphi h),\ 
			\ip{u_t}{\psi_t|h|^2}_{\eta,p}=\sigma_t(\varphi|h|^2).
		\end{aligned} 
		\label{eq:68}
	\end{equation}
	Substituting \eqref{eq:70} and \eqref{eq:68}into \eqref{eq:product}, we obtain the desired \eqref{eq:zakai-weak} for all $\varphi\in C_{c}^{\infty}(\mathbb{R}^{n})$. 
	
	The identity \eqref{eq:zakai-weak} holds for all $\varphi\in C^{2}(\mathbb{R}^{n})$ with $\varphi$ and its partial derivatives up to second-order growing at most polynomially as $|x|\rightarrow\infty$, because of the dominated convergence theorem. In fact, for each $\varphi\in C^{2}(\mathbb{R}^{n})$, we may find a sequence $\{\varphi_{n}(x)\}_{n=1}^{\infty}\subset C_{c}^{\infty}(\mathbb{R}^{n})$, such that
	\begin{equation}
		\lim\limits_{n\rightarrow\infty}\varphi_{n}(x) = \varphi(x),\ \lim\limits_{n\rightarrow\infty}\nabla\varphi_{n}(x) = \nabla\varphi(x), \ \lim\limits_{n\rightarrow\infty}\nabla^{2}\varphi_{n}(x) = \nabla^{2}\varphi(x),
	\end{equation}
	pointwisely for all $x\in\mathbb{R}^{n}$, based on the standard mollification method, and there exists a constant $C > 0$ and $q\in\mathbb{N}$, such that 
	\begin{equation}
		\max\left\{	|\varphi_{n}(x)|,\ |\nabla\varphi_{n}(x)|,\ |\nabla^{2}\varphi_{n}(x)|\right\}\leq C (1+|x|^{q}),\ \forall\ n\in\mathbb{N},\ x\in\mathbb{R}^{n}. 
	\end{equation}
	
	Since $\sigma_{t}\in C([0,T],H_{\eta',p})$, the polynomial growth condition implies that
	\begin{equation}
		\begin{aligned} 
		\int_{\mathbb{R}^{n}}\sigma(t,x)\varphi(x)dx &\leq C \int_{\mathbb{R}^{n}}\sigma(t,x)|x|^{r} dx \\
		&\leq \tilde{C}\|\sigma(t,\cdot)\|_{\eta',p}\left(\int_{\R^n}|x|^{2r}w_{\eta',p}^{-1}(x)dx\right)^{\frac{1}{2}} < \infty,
		\end{aligned} 
		\label{eq:74}
	\end{equation}
	where $r > 0$, $C > 0$ and $\tilde{C} > 0$ are some constants, and we used the Cauchy-Schwartz inequality. The same results hold for \eqref{eq:74}, if we substitute $\varphi(x)$ by its first and second order derivatives. 
\end{proof}

Formally, the solution $\sigma(t,x)$ of the DMZ equation \eqref{eq:dmz} is an unnormalized version of the conditional probability density function $\pi(t,x)$, that is,
\begin{equation}
	\pi(t,x) = \frac{\sigma(t,x)}{\sigma_{t}(\mathbf{1})},\ \text{with} \ \sigma_{t}(\mathbf{1}) = \int_{\R^n}\sigma(t,x) dx. 
	\label{eq:75}
\end{equation}
In order to make \eqref{eq:75} meaningful, it is required that the integral $\sigma_{t}(\mathbf{1}) > 0$, for all $t\in [0,T]$. Notice that the strict positivity of $\int_{\mathbb{R}^{n}}u^{Y}(t,x)dx > 0$ is proved in Theorem \ref{thm:rdmz}. Therefore, the boundedness of a continuous observation path, $Y_{t}$, implies that at least in a small closed ball, the unnormalized density function $\sigma(t,x)$ is strictly positive, and 
\begin{equation}
	\sigma_{t}(\mathbf{1}) = \int_{\R^n}u^{Y}(t,x) e^{Y_{t}^{\top} h(x)} dx > 0
\end{equation}
holds for all $t\in[0,T]$. 

Applying It\^o's formula to \eqref{eq:75}, we obtain the Kushner-Stratonovich equation satisfied by the normalized conditional density function $\pi(t,x)$ in the weak form:
\begin{equation}
	\pi_t(\varphi)=\pi_0(\varphi)+\int_0^t\pi_s(L\varphi)ds+
	\int_0^t[\pi_s(\varphi h)-\pi_s(\varphi)\pi_s(h)]\cdot[dY_s-\pi_s(h)ds],
\end{equation} 
for all $\varphi\in C^{2}(\mathbb{R}^{n})$ with $\varphi$ and its partial derivatives up to second-order growing at most polynomially as $|x|\rightarrow\infty$. 

Up to now, we have derived the existence result of a weak solution to the DMZ equation and Kushner-Stratonovich equation in buffered spaces, from the well - posedness of the robust DMZ equation \eqref{eq:rdmz}. 

\subsection{Uniqueness Result of the DMZ Equation and K-S Equation}
Generally speaking, the uniqueness result of the DMZ equation and Kushner-Stratonovich equation stems from the uniqueness result of the robust DMZ equation in a buffered space. Therefore, we need to restate the condition \ref{A3RDMZ} in Theorem \ref{thm:rdmz}, in order to make it compatible with this buffered setting:
\begin{enumerate}[label=\textup{(A\arabic*)},start=2]
	\item[(A2-B)]\label{A3RDMZ-b} Fix a constant $\eta* > 0$ and $p\geq 1$, for all $\eta\in (0,\eta^{*})$, there exist constants $\beta_{\eta,p}>0$ and $C_{\eta,p}\ge0$ such that for all $x\in\mathbb{R}^{n}$,
	\begin{equation}\label{eq:A3RDMZ-1}
		\begin{aligned} 
			-\frac12\divg f(x)&-\frac12|h(x)|^2
			+\frac12 f(x)\cdot\nabla U_{\eta,p}(x)+\frac14|\nabla U_{\eta,p}(x)|^2+\frac14\Delta U_{\eta,p}(x) \\
			&\le C_{\eta,p}-\beta_{\eta,p}(1+|x|^{2})^{p}.
		\end{aligned} 
	\end{equation}
\end{enumerate}

The weak solution of Kushner-Stratonovich equation in buffered spaces is defined to be an element in the following function class:
\begin{definition}[Buffered weighted Sobolev Kushner-Stratonovich density class]\label{def:ks-sobolev-class}
	For fixed constants $\eta > 0$ and $p\geq 1$, the class $\mathfrak A_{\eta,p}(Y,T)$ consists of observation-filtration progressively measurable density-valued processes $\pi_t(dx)=\pi(t,x)dx$ in the stochastic filtering setting such that, a.s.:
	\begin{enumerate}[label=\textup{(\roman*)}]
		\item $\pi(t,x)\ge0$ and $\int_{\mathbb{R}^{n}} \pi(t,x)dx=1$ for every $t\in[0,T]$;
		\item $\pi_{t}\in C([0,T]; H_{\eta,p})\cap L^{2}(0,T;V_{\eta,p})$;
		\item for every $\varphi\in C^{2}(\R^n)$ with $\varphi$ and its partial derivatives up to second-order growing at most polynomially as $|x|\rightarrow\infty$, the Kushner-Stratonovich equation
		\begin{equation}
			\pi_t(\varphi)=\pi_0(\varphi)+\int_0^t\pi_s(L\varphi)ds+
			\int_0^t[\pi_s(\varphi h^{\top})-\pi_s(\varphi)\pi_s(h^{\top})][dY_s-\pi_s(h)ds],
		\end{equation} 
		holds, where $L$ is the infinitesimal generator of the state process in the filtering system \eqref{eq:model}. 
	\end{enumerate}
\end{definition}
The existence result in Section \ref{sec:4.1} implies that the function class $\mathfrak{A}_{\eta,p}(Y,T)$ is not empty for $\eta>0$ and $p\geq 1$ compatible with the assumption (A2-B). The uniqueness result of Kushner-Stratonovich equation is stated as the following theorem:
\begin{theorem}\label{thm:kse}
	For fixed constants $\eta^{*} > 0$ and $p\geq 1$, assume that the conditions (A1) and (A2-B) hold, such that there exists a weighted variational solution to the robust DMZ equation \eqref{eq:rdmz} and thus the solution class $\mathfrak{A}_{\eta,p}(Y,T)$ of the Kushner-Stratonovich equation \eqref{eq:kse} is not empty for $\eta\in(0,\eta^{*})$. 
	
	If two elements $\pi^{1}(t,x)$ and $\pi^{2}(t,x)$ of $\mathfrak A_{\eta,p}(Y,T)$ have the same initial density $$\pi^{1}(0,x) = \pi^{2}(0,x),\ a.e.\ x\in\mathbb{R}^{n},$$ 
	then they coincide. Thus, uniqueness holds in the explicitly defined buffered weighted Sobolev Kushner-Stratonovich density class $\mathfrak{A}_{\eta,p}(Y,T)$.
\end{theorem}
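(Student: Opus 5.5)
The plan is to reduce uniqueness for the Kushner-Stratonovich equation to uniqueness for the robust DMZ equation (Theorem \ref{thm:rdmz}) by undoing the normalization and then the gauge transformation.

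\textbf{Step 1: unnormalize.} Given $\pi^i\in\mathfrak A_{\eta,p}(Y,T)$, $i=1,2$, define the scalar process
\[
\Lambda^i_t=\exp\Big(\int_0^t\pi^i_s(h^\top)\,dY_s-\tfrac12\int_0^t|\pi^i_s(h)|^2ds\Big).
\]
This is well defined because $h$ grows polynomially and $\pi^i\in C([0,T];H_{\eta,p})$. The moment bound \eqref{eq:74} then gives that $s\mapsto\pi^i_s(h)$ is continuous and bounded.

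Set $\sigma^i_t=\Lambda^i_t\pi^i_t$. By It\^o's product rule applied to $\Lambda^i_t\pi^i_t(\varphi)$, the correction terms $-\pi_s(\varphi)\pi_s(h)[dY_s-\pi_s(h)ds]$ cancel against $d\Lambda^i$ and the cross-variation. This is the standard computation reversing the derivation of the KS equation. Hence $\sigma^i$ satisfies the weak DMZ identity \eqref{eq:zakai-weak} for all polynomially growing $C^2$ test functions. Moreover $\sigma^i\in C([0,T];H_{\eta,p})\cap L^2(0,T;V_{\eta,p})$, since $\Lambda^i$ is a continuous positive scalar. The two solutions share the initial datum $\sigma^i_0=\pi_0$.

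\textbf{Step 2: gauge back.} Set $u^i(t,x)=e^{-Y_t^\top h(x)}\sigma^i(t,x)$. Fix a buffered $\eta'\in(0,\eta)$, which lies in $(0,\eta^*)$, so (A2-B) gives \ref{A3RDMZ} at level $\eta'$. By the argument of Theorem \ref{thm:zakai}\ref{zakai-reg} run in reverse, with $|Y_t^\top h|\le C_0M_T(Y)(1+|x|^2)^{p/2}$ absorbed by the gap $\eta-\eta'$ in the exponent, we get $u^i\in C([0,T];H_{\eta',p})$. The same bound applied to gradients, with $\nabla u^i=e^{-Y^\top h}(\nabla\sigma^i-\sigma^i\nabla h^\top Y_t)$, gives $u^i\in L^2(0,T;V_{\eta',p})$.

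Next, take test functions $\varphi=e^{-Y_t^\top h}\,w_{\eta',p}\,v$ with $v\in C_c^\infty$. Applying It\^o's formula to $\sigma^i_t(\varphi_t)$ is the reverse of the computation \eqref{eq:psi-ito}--\eqref{eq:70}. The $dY$ terms cancel and we obtain, pathwise,
\[
(u^i_t,v)_{\eta',p}=(u_0,v)_{\eta',p}-\int_0^t a_{Y,\eta',p}(s;u^i_s,v)\,ds .
\]
By boundedness of $a_{Y,\eta',p}$ (Theorem \ref{thm:2.2}(i)) and density of $C_c^\infty$ in $V_{\eta',p}$ (Lemma \ref{lem:2.1}), this yields $\partial_tu^i\in L^2(0,T;V_{\eta',p}')$ together with \eqref{eq:varid}. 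So each $u^i$ is a weighted variational solution at level $(\eta',p)$ with the same initial value.

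\textbf{Step 3: conclude.} Theorem \ref{thm:rdmz}\ref{rdmz-exist} at level $\eta'$ gives $u^1=u^2$, hence $\sigma^1=\sigma^2$. Normalizing, $\pi^i_t=\sigma^i_t/\sigma^i_t(\mathbf 1)$, because $\pi^i_t(\mathbf 1)=1$ forces $\Lambda^i_t=\sigma^i_t(\mathbf 1)$. Therefore $\pi^1=\pi^2$.

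\textbf{Main obstacle.} I expect the hardest step to be Step 2, and specifically justifying the time-dependent test functions. These involve $e^{-Y_t^\top h}$, which is a semimartingale in $t$ and does not have compact support in any uniform way. The approach I would try first is to mimic the partition argument used for \eqref{eq:product}. That requires the stochastic Fubini theorem for $\int\!\int\varphi\,\sigma_s\,dY_s\,dx$ and localization of $Y$ by stopping times so that $M_T(Y)$ is bounded. The gradient regularity of $u^i$ in $V_{\eta',p}$ also needs care, since $\nabla h$ carries an extra factor $(1+|x|^2)^{(p-1)/2}$. This factor is again absorbed by the exponential gap between $\eta$ and $\eta'$.
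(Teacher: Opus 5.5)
Your proposal is correct and follows essentially the same route as the paper: unnormalize by the exponential factor, gauge back into a buffered space $H_{\eta',p}$, $V_{\eta',p}$, verify the robust DMZ variational identity via It\^o's formula, and invoke uniqueness from Theorem \ref{thm:rdmz}; your identity $\Lambda^i_t=\sigma^i_t(\mathbf 1)$ makes explicit what the paper calls ``invertibility of the reconstruction.'' On your stated obstacle, the test functions $\varphi_t=e^{-Y_t^\top h}w_{\eta',p}v$ are all supported in $\operatorname{supp}v$ uniformly in $t$ (because $v\in C_c^\infty$), which simplifies the partition and stochastic Fubini argument.
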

\begin{proof}
	According to Theorem \ref{thm:rdmz}, under the assumptions (A1) and (A2-B), for an arbitrary  $\eta_{1}\in(0,\eta^{*})$, and a continuous observation path $Y$, there exists a weighted variational solution $u^{Y}\in L^{2}(0,T;V_{\eta_{1},p})\cap C([0,T];H_{\eta_{1},p})$. 
	
	Next, for arbitrary $\eta_{2}\in(0,\eta_{1})$, there exists $\sigma\in C([0,T];H_{\eta_{2},p})$ which solves the stochastic DMZ equation \eqref{eq:dmz}, and also its normalized version $\pi\in C([0,T];H_{\eta_{2},p})$, which is an element in the buffered weighted Sobolev Kushner-Stratonovich density class $\mathfrak{A}_{\eta_{2},p}(Y,T)$. 
	
	Assume that $\pi^1(t,x),\pi^2(t,x)\in\mathfrak A_{\eta_2,p}(Y,T)$ have the same initial density, $$\pi^1(0,x) = \pi^2(0,x), a.e. x\in\mathbb{R}^{n}.$$ Then, corresponding to each element $\pi^{i}$, we can reconstruct solutions $\sigma^{i}$ of the DMZ equation, and $u^{i}$ of the robust DMZ equation, respectively:
	\begin{equation} 
		\begin{aligned} 
		&\sigma^i(t,x)= \exp\left(\int_{0}^{t}\pi^{i}_{s}(h^{\top})dY_{s} - \frac{1}{2}\int_{0}^{t}|\pi^{i}_{s}(h)|^{2}ds\right)\pi^{i}(t,x),
		\\
		& u^i(t,x)=e^{-Y_{t}^{\top}h(x)}\sigma^i(t,x),\ i=1,2 .
		\end{aligned} 
		\label{eq:82}
	\end{equation} 
	Notice that for a fixed continuous observation path $Y$, the exponential martingale 
	\begin{equation}
		Z_{t}^{i} := \exp\left(\int_{0}^{t}\pi^{i}_{s}(h^{\top})dY_{s} - \frac{1}{2}\int_{0}^{t}|\pi^{i}_{s}(h)|^{2}ds\right), \ i=1,2,
	\end{equation}
	is bounded. According to the definition of $\mathfrak{A}_{\eta_{2},p}(Y,T)$, $$\pi^{i}\in C([0,T];H_{\eta_{2},p})\cap L^{2}(0,T;V_{\eta_{2},p}),$$ and thus,
	\begin{equation}
		\sigma^{i}\in C([0,T];H_{\eta_{2},p})\cap L^{2}(0,T;V_{\eta_{2},p}),\ i=1,2. 
	\end{equation}
	
	With the same estimation method in the proof of Theorem \ref{thm:zakai}, we can show that for each $\eta_{3}\in(0,\eta_{2})$, the reconstructed solution $u^{i}$ of the robust DMZ equation satisfies:
	\begin{equation}
		u^{i}\in C([0,T];H_{\eta_{3},p})\cap L^{2}(0,T;V_{\eta_{3},p}),\ i=1,2. 
	\end{equation}
	The property that $u^{i}(t,x)$ satisfies the robust DMZ equation \eqref{eq:varid} stems from a direct computation with It\^o's formula, since all the required regularity conditions are satisfied. Therefore, $u^{i}(t,x)$, $i=1,2$, are both the weighted variational solutions to the robust DMZ equation with buffered coefficient $\eta_{3}\in (0,\eta)$. 
	
	Based on the uniqueness result of robust DMZ equation and the assumption (A2-B), the uniqueness also holds in buffered weighted Sobolev KSE class $\mathfrak{A}_{\eta_{2},p}(Y,T)$, because the reconstruction process \eqref{eq:82} is invertible.
\end{proof}

\section{Sufficient Conditions for Well-posedness: Wide Applicability}\label{sec:5}
In this section, we give some useful sufficient conditions of well-posedness assumptions:
\begin{enumerate}[label=\textup{(A\arabic*)}]
	\item There exists a constant $C_0>0$ such that for all $x\in\mathbb{R}^n$,
	\begin{gather*}
		|f(x)|\le C_0(1+|x|^{2})^{p/2},\quad |\nabla f(x)|\le C_0(1+|x|^{2})^{(p-1)/2},\\
		|h(x)|\le C_0(1+|x|^{2})^{p/2},\quad |\nabla h(x)|\le C_0(1+|x|^{2})^{(p-1)/2},\\ |\Delta h(x)|\leq C_{0}(1+|x|^{2})^{(2p-1)/2}.
	\end{gather*}
	\item There exist constants $\beta_{\eta,p}>0$ and $C_{\eta,p}\ge0$ such that for all $x\in\mathbb{R}^{n}$,
	\begin{equation}
		\begin{aligned} 
			-\frac12 \operatorname{div} f(x) & -\frac12|h(x)|^2
			+\frac12 f(x)\cdot\nabla U_{\eta,p}(x)+\frac14|\nabla U_{\eta,p}(x)|^2+\frac14\Delta U_{\eta,p}(x) \\
			&\le C_{\eta,p}-\beta_{\eta,p}(1+|x|^{2})^{p}.
		\end{aligned} 
		\label{eq:86}
	\end{equation}
\end{enumerate}
on the coefficients of the filtering system \eqref{eq:model} for some fixed $\eta > 0$ and $p\geq 1$, with a view to showing that these assumptions are sufficiently general to cover most practically important systems, such as the detectable Kalman-Bucy filter. 

Generally speaking, the sufficient conditions presented below can be classified into three cases, namely:
\begin{itemize}
	\item the \textit{drift-dissipative case}, where well-posedness is mainly guaranteed by the dissipative nature of the drift term $f(x)$;
	\item the \textit{observation-dominated case}, where the observation function is sufficiently informative to ensure well-posedness;
	\item the \textit{drift-observation hybrid case}, where well-posedness stems from the combined effect of both the observation and the drift terms.
\end{itemize}

The first example is intended to illustrate that the well-posedness of the filtering problem can be analyzed within the weighted variational framework when the state dynamics in the filtering system \eqref{eq:model} is stable.

\begin{example}[Drift-dissipative cases]\label{ex:euclidean-linear}
	For a given $p\geq 1$, if the drift term $f(x)$ in the state equation of \eqref{eq:model} satisfies the dissipative condition:
	\begin{equation}
		x^{\top} f(x) \leq -c_{f}(1+|x|^{2})^{\frac{p+1}{2}} + C_{f},\ \forall \ x\in \mathbb{R}^{n},
		\label{eq:88}
	\end{equation}
	for some $c_{f}, C_{f} > 0$, then the condition \ref{A3RDMZ} holds for every observation function $h(x)$ satisfying \ref{A2}. In fact, the left-hand side of \eqref{eq:86} will be dominated by the term
	\begin{equation}
		\frac{1}{2}f(x)\nabla U_{\eta,p} (x) \leq -\eta (p+1)(1+|x|^{2})^{p} + \tilde{C}_{f},\ \forall\ x\in\mathbb{R}^{n},
	\end{equation}
	and the inequality \eqref{eq:86} holds accordingly. 
	
	Two important drift-dissipative cases are summarized as follows:
	
	Firstly, let $p=1$ and consider the affine drift term $f(x)=-Ax+a$ with $A+A^\top\ge2\alpha I_{n}$ for some $\alpha>0$. Let $h$ be any sensor satisfying \ref{A2} with $p=1$, for instance $h(x)=Hx+d$ with $H\in\R^{m\times n}$ arbitrary.  Then
	\[
	x^{\top} f(x) =-x^{\top}Ax+a^{\top}x
	\le -\alpha |x|^2+|a||x|,
	\]
	and \eqref{eq:88} holds for $p=1$. In this way, classical linear Kalman filtering systems with stable state and arbitrary linear observations can be studied using the robust DMZ equation theory.
	
	Secondly, for a given $p\geq 1$, if the drift term $f(x)$ can be written in the following form:	\[
	f(x)=-x(1+|x|^{2})^{\frac{p-1}{2}}+g(x),\qquad |g(x)|\le\kappa(1+|x|^{2})^{\frac{p}{2}},\qquad 0\le\kappa<1,
	\]
	with $|\nabla g(x)|\le C(1+|x|^{2})^{\frac{p-1}{2}}$, then
	\[
	\begin{aligned}
		x^{\top}f(x)
		&=-|x|^2(1+|x|^{2})^{\frac{p-1}{2}}+x^{\top}g(x)\leq -(1-\kappa)(1+|x|^{2})^{\frac{p+1}{2}}+(1+|x|^{2})^{\frac{p-1}{2}},
	\end{aligned}
	\]
	and we may choose $c_{f} = 1 - \kappa - \epsilon$ for some $0<\epsilon < 1-\kappa$, such that \eqref{eq:88} holds. 
\end{example}

Moreover, since the objective of the filtering problem is to produce effective estimates of an unknown dynamics based on observations, the drift term $f(x)$ often fails to satisfy the dissipative condition in many practical scenarios, and may lead to a chaotic or even unstable state process. The next example demonstrates that, as long as the observation function $h(x)$ is sufficiently informative, the filtering problem remains well-posed within our weighted variational framework, even when the drift $f(x)$ does not exhibit stability properties.

\begin{example}[Observation dominated cases]\label{ex:obs-coercive-sensors}
	Assume that the condition\ref{A2} holds for the drift term $f(x)$ and observation function $h(x)$, and for the same order $p$, the observation function $h(x)$ also satisfies
	\begin{equation} 
		|h(x)|\ge c_h (1+|x|^{2})^{p/2}-C_h, \ \forall\ x\in\mathbb{R}^{n},
		\label{eq:87}
	\end{equation}
	for some constants $c_{h}, C_{h} > 0$. 
	
	A direct computation shows that, under the growth assumption on $f$ in \ref{A2} and \eqref{eq:87}, the inequality \eqref{eq:86} holds for some constants $\beta_{\eta,p}, C_{\eta,p} > 0$. Consequently, the robust DMZ equation, the DMZ equation, and the Kushner-Stratonovich equation are all well-posed in this case.
	
	Moreover, since the observation function $h$ is continuous and \eqref{eq:87} holds on every compact set, the condition \eqref{eq:87} in fact imposes a lower bound on the growth rate of $h$ as $|x| \to \infty$. We therefore refer to this scenario as the observation-dominated case.
	
	Two classical yet important observation-dominated cases are worth recording here.
	
	First, let $p=1$ and let $h(x)=H_{1}x+H_{2}$ be affine, where the constant matrix $H_{1}\in\mathbb{R}^{m\times n}$ satisfies $H_{1}^{\top} H_{1}\ge cI_{m}$ for some constant $c > 0$. Then \eqref{eq:87} holds. Thus, full column-rank linear sensors and drift terms with linear or sublinear growth at infinity are among the observation dominated cases. Especially, the well-posedness of filtering equations for classical linear Kalman cases with full column rank observation matrix can be proved in the weighted variational setting of this paper. 
	
	Second, let $p\ge1$ and the observation function
	\[
	h(x)=(\gamma_1x_1^p,\ldots,\gamma_nx_n^p),\qquad \min_i|\gamma_i|>0,
	\]
	be the component-wise polynomial sensor. Then, \eqref{eq:87} holds for these $h(x)$ and as long as the growth rate of the drift term $f(x)$ satisfies \ref{A2}, the assumption \ref{A3RDMZ} will hold and the well-posedness of the filtering equations in this case has been derived. 
\end{example}

Finally, the last example concerns a hybrid case, in which the drift term $f(x)$ of the state process, though not dissipative, still possesses some degree of stability. In this scenario, the observation function $h(x)$ need not be as informative as in the previous observation-dominated case, yet the filtering problem remains well-posed within our weighted variational framework. A classical instance of this situation is the detectable Kalman-Bucy filter; we shall show that assumption \ref{A3RDMZ} largely corresponds to the detectability condition in the linear Kalman-Bucy setting.

\begin{example}[Drift-observation hybrid case]\label{ex:bounded}
	If neither the observation function is informative enough as in Example \ref{ex:obs-coercive-sensors} nor does the drift term satisfy the dissipative assumption in Example \ref{ex:euclidean-linear}, then the well-posedness of robust DMZ equation, DMZ equation and Kushner-Stratonovich equation may still be able to obtained in our weighted variational settings. The well-posedness stems from the interaction between $f(x)$ and $h(x)$ such that the condition \eqref{eq:86} holds.
	
	An important example for this hybrid case is the Kalman-Bucy filter with the detectability condition. 
	
	Let $f(x)=Fx+a$ and $h(x)=Hx+d$ with $p=1$.  Assume that $(F,H)$ is detectable, i.e., 
	there exists no eigenvalue $\lambda \in \sigma(F)$ with $\operatorname{Re}(\lambda) \ge 0$ and no nonzero vector $v \in \mathbb{C}^n$ (the corresponding right eigenvector) such that
	\begin{equation}
		F v = \lambda v, \quad H v = 0.
	\end{equation}
	Then, there exists a positive definite matrix $P\in\mathbb{R}^{n\times n}$, such that $FP + PF^{\top}$ is strictly negative definite in the null space of matrix $H$. 
	
	Without loss of generality, we may assume that $P = I_{n}$ is the identity matrix\footnote{For general positive definite matrix $P$, we may consider the invertible linear transformation $z_{t} = P^{\frac{1}{2}}x_{t}$.}. The left-hand side of \eqref{eq:86} becomes
	\begin{equation}
		C - \frac{1}{2}x^{\top}(H^{\top}H - 2\eta (F + F^{\top}) - 8\eta^2 I_{n} ) x,
	\end{equation}
	for some constant $C > 0$. The detectability condition implies \eqref{eq:86} holds for sufficient small $\eta > 0$. In fact, in the null space of the matrix $H$, we have $F+F^{\top}$ is strictly negative definite, while in the complement of the null space of $H$, the positive definite term $H^{\top} H$ dominates the quadratic form. 
	Therefore, the detectable Kalman-Bucy systems with rank-deficient observation matrix $H$ and stable unobserved modes is compatible with our weighted variational approach developed in this paper. 
\end{example}

The three cases above illustrate that the weighted variational approach proposed in this paper is applicable not only to systems with bounded coefficients, but also to those with unbounded coefficients; and not only to state equations with dissipative drift, but also to unstable dynamics with informative observations. It is worth emphasizing that this approach is, to a certain extent, specifically adapted to the filtering framework, where the observation plays a particularly significant role—beyond that in the state equation alone. This point is further elucidated by the following counterexample, which demonstrates that unstable dynamics with uninformative observations may fall outside the scope of the present framework.

\begin{example}[Counterexample: unstable dynamics without observations]
	Let us consider the one-dimensional linear stochastic differential equation:
	\begin{equation}
		dx_{t} = a x_{t} dt + dv_{t},\ x_{0}\sim \mathcal{N}(0,1),\ t\in[0,T],
		\label{eq:92}
	\end{equation}
	where $a > 0$ is a constant. The linear dynamics \eqref{eq:92} is unstable in the sense that the variance of $x_{t}$, which is explicitly given by
	\begin{equation}
		\text{Var} (x_{t}) = e^{2at}\left(1+\frac{1}{2a}\right) - \frac{1}{2a},\ \forall\ t\in [0,T],
	\end{equation}
	will tend to infinity with an exponential rate as $t\rightarrow\infty$. 
	
	If there are no observations, i.e., $h(x)\equiv 0$ in \eqref{eq:model}, then the robust DMZ equation \eqref{eq:rdmz}, the DMZ equation \eqref{eq:dmz} as well as the Kushner-Stratonovich equation \eqref{eq:kse} will all reduce to the Fokker-Planck equation corresponding to \eqref{eq:92}, that is,
	\begin{equation}
		\frac{\partial}{\partial t}p(t,x) = \frac{1}{2}\frac{\partial^{2}}{\partial x^{2}} p(t,x) - a\frac{\partial }{\partial x}(xp(t,x)),\ t\in[0,T],
		\label{eq:94}
	\end{equation}
	with initial value $p(0,x) = \frac{1}{\sqrt{2\pi}} e^{-\frac{1}{2}x^{2}}$. The solution of equation \eqref{eq:94} has an explicit form, which is given by 
	\begin{equation}
		p(t,x) =\sqrt{\frac{a}{\pi(e^{2at}(2a+1) - 1)}}\exp\left(-\frac{ax^{2}}{e^{2at}(2a+1) - 1}\right),\ t\in[0,T]. 
	\end{equation}
	A necessary and sufficient condition for $p(t,\cdot) \in H_{\eta,p}$ with $p=1$ is given by
	\begin{equation}
		\eta < \frac{a}{e^{2at}(2a+1) - 1}.
		\label{eq:96}
	\end{equation}
	Notice that the right-hand side of \eqref{eq:96} decays exponentially in time $t$; consequently, the parameter $\eta$ must be chosen in accordance with the terminal time $T$. In the meanwhile, the left-hand side of \eqref{eq:86} becomes
	\begin{equation}
		-a-\eta + (2a\eta + 4\eta^2) x^{2},
	\end{equation}
	and thus, the condition \ref{A3RDMZ} does not hold in this case. 
\end{example}

In summary, the three examples and the counterexample presented in this section indicate that the weighted variational framework developed in this paper is well aligned with the practical demands of filtering theory, namely, the efficient tracking of stochastic dynamics based on informative observations.

\section{Conclusion}\label{sec:conclusion}
%=====================================================================
In this paper, we present a weighted variational framework to study the well-posedness of important evolution equations in nonlinear filtering theory, i.e., the robust DMZ equation, the stochastic DMZ equation and the Kushner-Stratonovich equation. The existence and uniqueness of a weak solution to the robust DMZ equation is first derived, and the stochastic DMZ equation and Kushner-Stratonovich equation are then studied in the buffered weighted Sobolev spaces. 

The weighted Sobolev spaces, together with their buffered versions, are compatible with the gauge (exponential) transformation linking the stochastic DMZ equation and the robust DMZ equation. Instead of introducing weight functions of a new form, it suffices to adjust a single parameter in the weight function when passing from the study of solutions to the robust DMZ equation to that of the stochastic DMZ equation and the Kushner-Stratonovich equation. Consequently, under fairly general conditions—allowing, in particular, unbounded coefficients in the filtering system—we are able, for the first time, to provide a unified treatment of all three equations within a family of buffered spaces.

Moreover, sufficient conditions for the well-posedness of the filtering equations are discussed in this paper, illustrating the wide applicability of the weighted variational framework, including the important case of detectable Kalman-Bucy filters. These sufficient conditions also demonstrate that the weighted variational framework introduced herein is naturally aligned with the fundamental objective of the filtering problem—namely, to estimate the trajectory of an unknown dynamics by means of informative observations.

Some promising research directions are outlined as follows. Firstly, the energy estimates in this paper are carried out for the robust DMZ equation; a direct energy estimate for the DMZ equation and the Kushner-Stratonovich equation within the weighted variational framework remains to be developed, which would provide an alternative probabilistic perspective on the well-posedness of the filtering equations. Secondly, the weighted variational framework introduced herein is employed to study filtering equations on a finite time horizon. The long-time behavior of these equations is also a subject of significant interest in both theory and industrial practice. Finally, the weighted variational framework may be extended to more general settings, including filtering systems with time-dependent coefficients, correlated state and observation noises, and more general L\'evy-type noise.

%=====================================================================

%=====================================================================

\appendix

\section{Lions–Magenes Theorem}\label{app:lions}
%=====================================================================

This appendix recalls the Lions–Magenes theorem, which has been used in the proof of Theorem \ref{thm:rdmz}. For the reader's convenience, we state and prove here a version of this theorem that is better adapted to the notation and assumptions used in the main text. For a historical account, the reader is referred to Lions and Magenes \cite[Ch.~3, Sect.~1 and 4]{LionsMagenes72} and Showalter \cite[Ch.~III]{Showalter97}.

\begin{theorem}[Lions–Magenes]\label{thm:lions}
	Let \(V\hookrightarrow H\cong H'\hookrightarrow V'\) be a Gelfand triple of
	real separable Hilbert spaces whose first embedding is dense and continuous. For \(t\in[0,T]\), let
	\(a(t;\cdot,\cdot):V\times V\to\R\) be a bilinear form such that
	\begin{enumerate}[label=\textup{(\alph*)}]
		\item \(t\mapsto a(t;v,w)\) is Lebesgue measurable for every \(v,w\in V\);
		\item there is \(M>0\) such that
		\[
		|a(t;v,w)|\le M\norm{v}_V\norm{w}_V
		\quad\text{for a.e. }t\text{ and all }v,w\in V;
		\]
		\item there are \(\alpha>0\) and \(\lambda\ge0\) such that
		\[
		a(t;v,v)\ge
		\alpha\norm{v}_V^2-\lambda\norm{v}_H^2
		\quad\text{for a.e. }t\text{ and all }v\in V.
		\]
	\end{enumerate}
	Then, for every \(u_0\in H\) and
	\(F\in L^2(0,T;V')\), there exists a unique
	\[
	u\in L^2(0,T;V)\cap C([0,T];H),
	\ \text{with} \ \partial_{t} u\in L^2(0,T;V'),
	\]
	such that \(u(0)=u_0\) and
	\begin{equation}\label{eq:A-lions-equation}
		\dual{\partial_t u(t)}{v}_{V',V}+a(t;u(t),v)
		=\dual{F(t)}{v}_{V',V}
		\quad\text{for a.e. }t\in(0,T)\text{ and all }v\in V.
	\end{equation}
\end{theorem}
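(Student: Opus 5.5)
The plan is the classical Faedo–Galerkin construction, together with an energy argument for uniqueness. First reduce to the coercive case. Set $u(t)=e^{\lambda t}\tilde u(t)$. Then $\tilde u$ solves the same problem with the form $\tilde a(t;v,w)=a(t;v,w)+\lambda(v,w)_H$ and right-hand side $e^{-\lambda t}F$. By (c), $\tilde a(t;v,v)\ge\alpha\norm{v}_V^2$, and $\tilde a$ is still bounded (with constant $M+\lambda C_{V\hookrightarrow H}^2$) and measurable in $t$. So we may assume $\lambda=0$.

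\textbf{Existence.} Since $V$ is separable, pick a linearly independent sequence $\{e_k\}$ whose span is dense in $V$; by the dense embedding this span is also dense in $H$. Let $V_N=\mathrm{span}\{e_1,\dots,e_N\}$, and pick $u_{0N}\in V_N$ with $u_{0N}\to u_0$ in $H$. For example, take the $H$-orthogonal projections, after Gram–Schmidt in $H$.

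1. Seek $u_N(t)=\sum_{k\le N}g_{kN}(t)e_k$ satisfying
\[
(u_N'(t),e_j)_H+a(t;u_N(t),e_j)=\dual{F(t)}{e_j}, \qquad j\le N .
\]
This is a linear ODE system with a nonsingular Gram matrix and $L^\infty$ coefficients by (a) and (b). Carathéodory theory gives a unique absolutely continuous solution on $[0,T]$.

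2. Test with $u_N$ and use coercivity and Young's inequality:
\[
\tfrac{d}{dt}\norm{u_N}_H^2+\alpha\norm{u_N}_V^2\le \alpha^{-1}\norm{F}_{V'}^2 .
\]
Hence $u_N$ is bounded in $L^2(0,T;V)\cap L^\infty(0,T;H)$, uniformly in $N$.

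3. Extract a subsequence with $u_N\rightharpoonup u$ weakly in $L^2(0,T;V)$. Fix $j$ and $\phi\in C^1([0,T])$ with $\phi(T)=0$. Multiply the Galerkin equation by $\phi$, integrate by parts in time, and pass to the limit; the limit passage in the form term works because $v\mapsto\int_0^T a(t;v,\phi e_j)\,dt$ is a continuous linear functional on $L^2(0,T;V)$. Then use density of $\bigcup_N V_N$ in $V$. The result is that $u'=F-A(\cdot)u$ in the distributional sense, with values in $V'$. Here $A(t)\in\mathcal L(V,V')$ is the operator induced by $a(t;\cdot,\cdot)$, and the bound $\norm{A(t)}\le M$ gives $u'\in L^2(0,T;V')$.

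4. Use the standard lemma (Lions–Magenes): if $u\in L^2(V)$ and $u'\in L^2(V')$, then $u\in C([0,T];H)$ and $t\mapsto\norm{u(t)}_H^2$ is absolutely continuous with derivative $2\dual{u'}{u}$. Prove it by mollifying in time after extending $u$ across the endpoints. Finally, the integration-by-parts identity with $\phi(0)\neq0$, compared against the weak limit, identifies $u(0)=u_0$.

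\textbf{Uniqueness.} The difference $w$ of two solutions solves the problem with $F=0$ and $w(0)=0$. Apply the energy identity from step 4 together with coercivity:
\[
\tfrac12\norm{w(t)}_H^2+\alpha\int_0^t\norm{w}_V^2\le 0 .
\]
Hence $w\equiv0$.

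I expect the main obstacle to be step 4: the continuity of $u$ into $H$ and the chain rule for $\norm{u}_H^2$, together with the identification of the initial datum. These need the mollification argument and the Gelfand-triple compatibility $\dual{v}{w}=(v,w)_H$ for $v\in H$, $w\in V$. Everything else is routine compactness.
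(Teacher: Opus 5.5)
Your argument is correct, but it takes a genuinely different route from the paper's. Both proofs begin with the same shift $u=e^{\lambda t}\tilde u$ to the coercive case. After that, the paper uses Rothe's method (implicit Euler in time) instead of Faedo--Galerkin. It solves $\tau^{-1}(u_k^N-u_{k-1}^N)+A(t_k)u_k^N=F(t_k)$ in all of $V$ by Lax--Milgram at each step. It then derives discrete energy bounds for $u_k^N$ and for the difference quotients in $V'$. Finally it passes to weak limits of the piecewise-constant and piecewise-affine interpolants, whose difference is $O(\tau)$ in $L^2(0,T;V')$. That approach needs no basis, each step is a stationary elliptic problem, and the bound on $\partial_t u$ in $L^2(0,T;V')$ comes directly out of the scheme. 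You instead recover $u'=F-A(\cdot)u\in L^2(0,T;V')$ a posteriori from $\norm{A(t)}\le M$.

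Your route buys two things. First, the Carath\'eodory ODE accommodates hypothesis (a) (mere measurability in $t$) and $F\in L^2(0,T;V')$ exactly as stated. The paper's scheme samples $F(t_k)$ and $a(t_k;\cdot,\cdot)$ pointwise. Its passage to the limit in the sampled term $a(t_k;\bar u_N(t),\phi(t))$, $t\in(t_{k-1},t_k]$, tacitly needs more time regularity of the form than (a). One option is continuity of $t\mapsto A(t)$ in operator norm, which does hold in the application because $t$ enters only through $Y_t$. The other is to use cell averages $\tau^{-1}\int_{t_{k-1}}^{t_k}$ of $F$ and $a$ in place of point values. Second, your Step 4 supplies what the paper leaves implicit: $u\in C([0,T];H)$, the identification $u(0)=u_0$, and the energy identity $\frac{d}{dt}\norm{w}_H^2=2\dual{w'}{w}_{V',V}$ on which both uniqueness arguments rest.
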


\begin{proof}
	Firstly, we may reduce the theorem to the coercive case. Set
	\[
	u(t)=e^{\lambda t}z(t).
	\]
	A direct calculation in \(V'\) shows that the element $u$ satisfies \eqref{eq:A-lions-equation}
	is equivalent to
	\begin{equation}\label{eq:A-shifted-equation}
		\dual{\partial_t z(t)}{v}_{V',V}+b(t;u(t),v)
		=\dual{e^{-\lambda t}F(t)}{v}_{V',V}
		\quad\text{for a.e. }t\in(0,T)\text{ and all }v\in V.
	\end{equation}
	where the shifted form defined by
	$b(t;v,w):=a(t;v,w)+\lambda\ip{v}{w}_H$ is bounded and coercive:
	\begin{equation}\label{eq:A-coercive}
		b(t;v,v)\ge\alpha\norm{v}_V^2.
	\end{equation}
	It is therefore enough to prove the theorem in the coercive case
	\(\lambda=0\).  
	
	Since the embedding $V\hookrightarrow H$ is continuous and dense, there exists a sequence \(\{u_0^N\}_{N=1}^{\infty}\subset V\) satisfying both
	\begin{equation}\label{eq:A-initial-approx}
		u_0^N\longrightarrow u_0\quad\text{strongly in }H,
		\ \text{and} \ 
		\frac{1}{N}\norm{u_0^N}_V^2\longrightarrow0. 
	\end{equation}
	Fix \(N\in\mathbb N\), and set
	$
	\tau:=\frac{T}{N},\ 
	t_k:=k\tau, \ k=1,\cdots,N. 
	$
	
	Starting from \(u_0^N\), we may define a time-discretization solution \(u_k^N\in V\), \(k=1,\ldots,N\) of \eqref{eq:A-lions-equation}, successively
	by
	\begin{equation}\label{eq:A-Rothe}
		\ip{\frac{u_k^N-u_{k-1}^N}{\tau}}{v}_H
		+a(t_{k};u_k^N,v)
		=\dual{F(t_{k})}{v}_{V',V},
		\qquad v\in V.
	\end{equation}
	Indeed, after multiplication by \(\tau\), \eqref{eq:A-Rothe} becomes:
	\begin{equation}
		\ip{u_{k}^{N}}{v}_{H} + \tau a(t_{k};u_{k}^{N},v) = \ip{u_{k-1}^{N}}{v}_{H} + \tau\dual{F(t_{k})}{v}_{V',V}.
		\label{eq:103}
	\end{equation}
	The bilinear form on the left-hand side of \eqref{eq:103} is coercive and the right-hand side is a bounded functional of $v$. Thus, the Lax-Milgram theorem gives a unique
	\(u_k^N\in V\) at each step.  This constructs the entire discrete trajectory.
	
	Choosing \(v=u_k^N\) in \eqref{eq:A-Rothe}, multiplying by \(2\tau\), and using
	\begin{equation}\label{eq:A-polarization-discrete}
		2\ip{u_k^N-u_{k-1}^N}{u_k^N}_H
		=
		\norm{u_k^N}_H^2-\norm{u_{k-1}^N}_H^2
		+\norm{u_k^N-u_{k-1}^N}_H^2,
	\end{equation}
	we deduce the discrete energy inequality:
	\begin{equation} 
		\begin{aligned}
			&\|u_k^N\|_H^2-\|u_{k-1}^N\|_H^2
			+\|u_k^N-u_{k-1}^N\|_H^2
			+\alpha\tau\|u_k^N\|_V^2
			\leq
			\frac{\tau}{\alpha}\|F(t_{k})\|_{V'}^2,\ k=1,\cdots,N. 
		\end{aligned}
		\label{eq:105}
	\end{equation} 
	Taking the summation of \eqref{eq:105} over $k$ from 1 to $N$, we obtain:
	\begin{equation}\label{eq:A-discrete-bounds}
		\max_{0\le k\le N}\norm{u_k^N}_H^2 + 
		\sum_{k=1}^N\norm{u_k^N-u_{k-1}^N}_H^2 + \alpha\tau\sum_{k=1}^{N}\|u_k^N\|_V^2\le \norm{u_0^N}_H^2
		+\frac1\alpha\norm{F}_{L^2(0,T;V')}^2,
	\end{equation}
	The right-hand side of \eqref{eq:A-discrete-bounds} is bounded because \(u_0^N\to u_0\) in \(H\).
	
	Regarding the $\frac{u_k^N-u_{k-1}^N}{\tau}$ in \eqref{eq:A-Rothe} as an element in $V'$, we then obtain
	\begin{align}
		\tau\sum_{k=1}^N\norm{\frac{u_k^N-u_{k-1}^N}{\tau}}_{V'}^2
		&\le
		2\tau\sum_{k=1}^N\norm{F(t_{k})}_{V'}^2
		+2M^2\tau\sum_{k=1}^N\norm{u_k^N}_V^2 \\
		&\le
		2\norm{F}_{L^2(0,T;V')}^2+\frac{2M^2}{\alpha}\left(\norm{u_0^N}_H^2
		+\frac1\alpha\norm{F}_{L^2(0,T;V')}^2,\right)
		\label{eq:A-dt-discrete}
	\end{align}
	Thus both the discrete solutions and their discrete derivatives have bounds
	independent of \(N\).
	
	Define the right-continuous piecewise constant functions
	\[
	\bar u_N(t):=u_k^N,\qquad
	\bar F_N(t):=F(t_{k}),\ t\in (t_{k-1}, t_{k}], \ k=1,\cdots,N,
	\]
	and the continuous piecewise affine interpolant
	\begin{equation}\label{eq:A-affine-interpolant}
		\widehat u_N(t)
		:=
		u_{k-1}^N+(t-t_{k-1})\frac{u_{k}^{N} - u_{k-1}^{N}}{\tau},
		\qquad t\in[t_{k-1},t_k].
	\end{equation}
	Then \(\partial_{t} \widehat u_N=\frac{u_{k}^{N} - u_{k-1}^{N}}{\tau}\) a.e. on \((t_{k-1},t_{k}]\), and 
	\[
	\widehat u_N(t)-\bar u_N(t)
	=-(t_{k} - t)\frac{u_{k}^{N} - u_{k-1}^{N}}{\tau}. 
	\]
	Consequently,
	\begin{equation}\label{eq:A-interpolant-difference}
		\begin{aligned} 
		\norm{\widehat u_N-\bar u_N}_{L^2(0,T;V')}^2
		&=
		\frac{\tau^3}{3}\sum_{k=1}^N\norm{\frac{u_{k}^{N} - u_{k-1}^{N}}{\tau}}_{V'}^2 \\
		&=
		\frac{\tau^2}{3}
		\left(\tau\sum_{k=1}^N\norm{\frac{u_{k}^{N} - u_{k-1}^{N}}{\tau}}_{V'}^2\right)
		\xrightarrow{N\rightarrow\infty}0.
		\end{aligned} 
	\end{equation}
	The above boundedness results imply that there exists an element \(u\in L^2(0,T;V)\), with \(\partial_{t} u\in L^2(0,T;V')\), and \(\bar u\in L^2(0,T;V)\), such that
	\begin{equation}\label{eq:A-Rothe-convergence}
		\widehat u_N\rightharpoonup u
		\quad\text{in }\mathcal L^{2}(0,T; V),\  \partial_t \widehat u_N\rightharpoonup  \partial_t u
		\ \text{in }\mathcal L^{2}(0,T; V'),
		\ 
		\bar u_N\rightharpoonup\bar u
		\quad\text{in }L^2(0,T;V).
	\end{equation}
	and the strong convergence
	\eqref{eq:A-interpolant-difference} forces \(\bar u=u\).
	
	Notice that the time-discretization equality \eqref{eq:A-Rothe} can be written as
	\begin{equation}
		\ip{\partial_{t}\hat{u}_{N}}{v}_H
		+a(t_{k};\bar{u}_k^N,v)
		=\dual{F(t_{k})}{v}_{V',V},
		\qquad v\in V.
		\label{eq:112}
	\end{equation}
	For \(\phi\in L^2(0,T;V)\), integrating \eqref{eq:112} with respect to $t$ over $[0,T]$ gives
	\begin{equation}\label{eq:A-Rothe-weak}
		\int_0^T\dual{\partial_{t} \widehat u_N}{\phi}_{V',V}\,dt
		+\int_0^T a(t;\bar u_N,\phi)\,dt
		=
		\int_0^T\dual{\bar F_N}{\phi}_{V',V}\,dt.
	\end{equation}
	
	Letting \(N\to\infty\) in \eqref{eq:A-Rothe-weak}, we find
	\[
	\int_0^T\dual{\partial_{t}u}{\phi}\,dt
	+\int_0^T a(t;u,\phi)\,dt
	=
	\int_0^T\dual{F}{\phi}\,dt
	\qquad\text{for every }\phi\in L^2(0,T;V).
	\]
	Since $\phi$ is arbitrarily chosen, we have
	\begin{equation}
		\dual{\partial_t u(t)}{v}_{V',V}+a(t;u(t),v)
		=\dual{F(t)}{v}_{V',V}
		\quad\text{for a.e. }t\in(0,T)\text{ and all }v\in V,
	\end{equation}
	and $u$, together with $\partial_{t} u$, is a weak solution to \eqref{eq:A-lions-equation}. 
	
	For the uniqueness result, let \(u_1,u_2\) be two solutions to \eqref{eq:A-lions-equation} and put \(w=u_1-u_2\).  Then
	\(w\in L^{2}(0,T;V)\) with \(\partial_{t} w\in L^{2}(0,T;V')\), \(w(0)=0\), and satisfies
	\begin{equation}
		\dual{\partial_t w(t)}{v}_{V',V}+a(t;w(t),v)
		=0
		\quad\text{for a.e. }t\in(0,T)\text{ and all }v\in V,
		\label{eq:115}
	\end{equation}
	Take $v = w$ in \eqref{eq:115}, and we obtain 
	\[
	\frac12\frac{d}{dt}\norm{w(t)}_H^2
	=-a(t;w(t),w(t))
	\le\lambda\norm{w(t)}_H^2
	\quad\text{a.e.}
	\]
	Gr\"onwall's inequality implies \(w=0\), which proves the uniqueness.
\end{proof}

\bibliographystyle{siamplain}
\bibliography{references}

\newpage

\section*{Technical Report on AI-Assisted Research}
This document serves as a technical report for the AI-assisted research conducted during this study. During the course of this research, AI Mathematician (AIM)\footnote{Y. Liu, Y. Huang, Y. Wang, P. Li, and Y. Liu, AI Mathematician: Towards Fully Automated Frontier Mathematical Research, arXiv preprint arXiv:2505.22451, 2025.}, an AI research agent, is acknowledged for contributing to some of the proof ideas and early-stage proof strategies, and also generating the draft proofs for human verification.. This technical report provides a systematic documentation of the AI-assisted contributions and is based on the output and log files of AIM.

\subsection{Scope and division of labor}

The human authors supplied the filtering model, the objective of proving well-posedness for the robust Duncan-Mortensen-Zakai, stochastic DMZ, and Kushner-Stratonovich equations with polynomially growing coefficients, and the standards by which a proposed proof would be accepted. They also selected the final hypotheses, examples, theorem statements, and exposition. AIM was used as an exploratory mathematical assistant: it compared candidate weighted spaces, expanded energy identities, organized the variational proof into lemmas, and subjected intermediate claims to repeated adversarial review. Every result retained in the manuscript was selected and checked by the human authors.

\subsection{The principal contribution: selecting the weight}
The most consequential AIM contribution was to turn the vague instruction ``choose a weight that absorbs the unbounded coefficients'' into a precise choice of exponent. In the notation of Section~\ref{sec:framework}, the weight is
\[
w_{\eta,p}(x)=e^{U_{\eta,p}(x)},\ \text{with} \ \ 
U_{\eta,p}(x)=2\eta\,(1+|x|^2)^{\frac{p+1}{2}},
\]
so that the spaces $H_{\eta,p}$ and $V_{\eta,p}$ in \eqref{eq:wl2} and \eqref{eq:spaces} are weighted by a single exponential with buffering parameter $\eta > 0$ and regularity parameter $p\geq 1$, which is decided by the growth order of the coefficients in assumption \ref{A2}. 

Early AIM paths fixed a Gaussian weight $e^{-\alpha|x|^2}$, a quadratic exponent that is adequate only in a comparatively strong observation-coercivity regime. A separate super-exponentially decaying candidate $e^{-\kappa|x|^{2p+2}}$ was also rejected in the review regime of AIM, because, as in the output of AIM, it overwhelms the negative potential of order $|x|^{2p}$ available from the observation term $-\tfrac12|h|^2$ in \eqref{eq:A3RDMZ-R0}. However, this diagnosis isolated the correct scale $(1+|x|^2)^{(p+1)/2}$.

In the meanwhile, the inverse-gauge factor $e^{Y_t^\top h(x)}$ in \eqref{eq:gauge} has exponent of order $|x|^p$, so the weight must grow faster, which results in the buffered-space estimate \eqref{eq:54b}-\eqref{eq:55}. Based on the above weight functions, the following assumption
\[
-\tfrac12\divg f-\tfrac12|h|^2+\tfrac12 f\cdot\nabla U_{\eta,p}+\tfrac14|\nabla U_{\eta,p}|^2+\tfrac14\Delta U_{\eta,p} \leq C_{\eta,p} - \beta_{\eta,p}(1+|x|^{2})^{p},
\]
i.e.\ exactly \eqref{eq:A3RDMZ-R0}, was proposed. The endpoint choice of the weight function and this final assumption were then fixed in the authors' revision instructions; AIM's defensible contribution is the earlier identification and mathematical testing of the $(1+|x|^2)^{(p+1)/2}$ scale, followed by the systematic implementation of the adopted assumptions.

\subsection{From the weight to a unified proof architecture}

The iterative workflow in AIM also helped assemble the weighted Gelfand triple $(V_{\eta,p},H_{\eta,p},V_{\eta,p}')$ of Lemma~\ref{lem:2.1} and the correct bilinear form $a_{Y,\eta,p}$ in \eqref{eq:form}. 
The same exponent $(1+|x|^2)^{(p+1)/2}$ also produces the buffered-space mechanism of Section~\ref{sec:4}. For $0<\eta'<\eta$, the margin $(\eta-\eta')(1+|x|^2)^{(p+1)/2}$ dominates the factor $Y_t\cdot h(x)$ of order $|x|^p$; consequently the inverse gauge $\sigma=e^{Y_t^\top h(x)}u$ of \eqref{eq:gauge} maps a solution $u$ in the $\eta$-weighted space $H_{\eta,p}$ into the weaker $\eta'$-weighted space $H_{\eta',p}$, without changing the form of the weight, as shown by the estimate \eqref{eq:54b} in the proof of Theorem~\ref{thm:zakai}. This parameter loss from $\eta$ to $\eta'$ is precisely what allows the robust DMZ equation \eqref{eq:rdmz}, the stochastic DMZ equation \eqref{eq:dmz}, and the normalized Kushner-Stratonovich equation \eqref{eq:kse} to be treated within one family of buffered weighted spaces, culminating in the well-posedness results of Theorem~\ref{thm:rdmz} and Theorems~\ref{thm:zakai}-\ref{thm:kse}.

\subsection{Verification and limitations}

The record includes unsuccessful branches. Human authors' review found that an early AIM proof required an unstated coercivity inequality, did not match the requested function space, and left bounded-domain compatibility and uniqueness gaps. Later expansion attempts also remained partial. These failures were not counted as mathematical results; they helped determine the final hypotheses and proof route. The surviving attribution is supported by dated local proof paths, AIM session records, and successive revision files. Classical gauge transformations, the Lions-Magenes theorem, the final coefficient condition, and the authors' mathematical verification are not attributed to AIM.

\end{document}